\newtheorem{thm}{Theorem}[section]
\newtheorem{cor}[thm]{Corollary}
\newtheorem{lem}[thm]{Lemma}
\newtheorem{obs}{Observation}
\theoremstyle{definition}
\newtheorem{defin}[thm]{Definition}
\newcommand*\oline[1]{%
  \vbox{%
    \hrule height 0.5pt
    \kern0.25ex
    \hbox{%
      \kern-0.2em
      \ifmmode#1\else\ensuremath{#1}\fi
      \kern -0.05em
    }
  }
}
\numberwithin{equation}{section}
\begin{document}


\baselineskip=17pt


\title{Strong Homology, Derived Limits, and Set Theory}

\author{Jeffrey Bergfalk\\
Department of Mathematics\\
Malott Hall\\
Cornell University\\
Ithaca, NY 14853-4201\\
U.S.A.\\
E-mail: bergfalk@math.cornell.edu}

\date{}

\maketitle


\renewcommand{\thefootnote}{}

\footnote{2010 \emph{Mathematics Subject Classification}: Primary 03E75; Secondary 55N40.}

\footnote{\emph{Key words and phrases}: strong homology, derived limits, additivity, PFA.}

\footnote{Research was supported in part by NSF grant DMS-1262019.}

\renewcommand{\thefootnote}{\arabic{footnote}}
\setcounter{footnote}{0}


\begin{abstract}
We consider the question of the additivity of strong homology. This entails isolating the set-theoretic content of the higher derived limits of an inverse system indexed by the functions from $\mathbb{N}$ to $\mathbb{N}$. We show that this system governs, at a certain level, the additivity of strong homology over sums of arbitrary cardinality. We show in addition that, under the Proper Forcing Axiom, strong homology is not additive, not even on closed subspaces of $\mathbb{R}^4$.
\end{abstract}

\section{Introduction}

The strong homology theory $\oline{H}_*$, defined for all topological spaces, has the following desirable properties:
\begin{enumerate}
\item It satisfies all the Eilenberg-Steenrod axioms on paracompact pairs $(X,A)$.
\item It is strong shape invariant.
\item It is a Steenrod-type homology theory (and therefore Alexander dual to $\check{H}^*$); it satisfies, in other words, two of the three axioms Milnor proposed to supplement Eilenberg and Steenrod's (\cite{Milnor1}, \cite {Milnor2}; see \cite{Mar1}).
 \end{enumerate}
 It remains an open question on what class of spaces it may satisfy the third of those axioms, \textit{additivity}, the condition that every mapping
 $$i:\displaystyle\bigoplus_A \oline{H}_p(X_\alpha)\rightarrow \oline{H}_p(\displaystyle\coprod_A X_\alpha)$$
 induced by inclusion maps $i_\alpha:X_\alpha\hookrightarrow\coprod_A X_\alpha$ be an isomorphism.

It was in investigation of this question that Marde\'si\v{c} and Prasolov computed the strong homology of $Y^{(k)}$, the topological sum of countably many $k$-dimensional Hawaiian earrings. They showed, in particular, that $\oline{H}_p(Y^{(k)})=\text{lim}^{k-p}\mathbf{A}$ for $0<p<k$, where $\mathbf{A}$ is an abelian pro-group indexed by $\mathbb{N}^\mathbb{N}$ (see below). For a single $k$-dimensional Hawaiian earring $X^{(k)}$, $\oline{H}_p(X^{(k)})=0$ for $0<p<k$; thus additivity requires at least that $\text{lim}^n\,\mathbf{A}=0$ for $n>0$. Marde\'si\v{c} and Prasolov then showed that the continuum hypothesis implies that $\text{lim}^1\,\mathbf{A}\neq 0$ \cite{MarPras}. Shortly thereafter, Dow, Simon, and Vaughan showed that the Proper Forcing Axiom (PFA) implies that $\text{lim}^1\,\mathbf{A}=0$ and, hence, that the vanishing of $\text{lim}^1\,\mathbf{A}$ is independent of the axioms of ZFC \cite{DSV}. This vanishing, in fact, is a question of broad interest in its own right; see \cite{T1}, for instance, and the discussion therein.

It is the purpose of this note to extend those investigations. In sections 3 and 4, we show the vanishing of $\text{lim}^2\,\mathbf{A}$ also independent of the axioms of ZFC and characterize, more generally, the higher $\text{lim}^n\,\mathbf{A}$. In particular, we show that, under PFA, strong homology is not additive, not even on the category of, e.g., closed subspaces of $\mathbb{R}^4$ (our witness in this case is $\;\oline{H}_1(Y^{(3)})$). In section 5, for $\kappa$ infinite, we let $\mathbf{A}_\kappa$ denote a pro-group analogous to $\mathbf{A}$ but indexed by $\mathbb{N}^\kappa$; we show $\text{lim}^1\,\mathbf{A}_\kappa=0$ if and only if $\text{lim}^1\,\mathbf{A}=0$. Extending, as it does, the topological significance of the system $\mathbf{A}$, this is the main theorem of the paper. In section 6, we list some open problems.

In section 2, we define our notation, the system $\mathbf{A}$, and briefly review the derived functors $\text{lim}^n$ of $\text{lim}$. This paper aims to interest readers in both homological algebra and set theory, and therefore - with a few mild exceptions in section 4 - assumes no more than a basic knowledge of either. In particular, no knowledge of forcing is presumed; the reader need only understand that the Proper Forcing Axiom, $\diamondsuit(S_1^2)$, $\text{MA}_{\aleph_1}$, and $\mathfrak{d}=\aleph_1$ (or $\aleph_2$) are prominent set-theoretic hypotheses independent of the axioms of ZFC. For more on the Proper Forcing Axiom, see in particular \cite{Moore}. For more on set theory generally, see \cite{Jech} or \cite{Kunen}. For further on $\text{lim}$ and its derived functors, see [Ma $\mathsection 11$] and \cite{Jen}.

\section{Background and Notation}

Our inverse systems all will consist of abelian groups $X_f$ and ``bonding'' homomorphisms $p_{fg}:X_g\rightarrow X_f$ for every $f\leq g$. Our index-set will typically be $\mathcal{N}=\mathbb{N}^\mathbb{N}$, ordered coordinatewise: $f\leq g$ if and only if $f(i) \leq g(i)$ for all $i\in\mathbb{N}$. Our particular focus is $\mathbf{A}=(A_f,p_{fg},\mathcal{N})$, where $$A_f=\displaystyle\bigoplus_{i\in\mathbb{N}}\mathbb{Z}^{f(i)}$$ with projection mappings $p_{fg}$. Relatedly, $\mathbf{B}=(B_f,p_{fg},\mathcal{N})$, where
$$B_f=\displaystyle\prod_{i\in\mathbb{N}}\mathbb{Z}^{f(i)}$$
We consider only level morphisms $\mathbf{F}:\mathbf{X}\rightarrow\mathbf{Y}$ among such systems, i.e., collections of functions $F_f:X_f\rightarrow Y_f$ which commute with all the bonding maps. Likewise, terms of the quotient $\mathbf{Y}/\mathbf{X}$ are of the form $Y_f/X_f$, so that
\begin{align}\label{e:1}0\rightarrow \mathbf{A}\xrightarrow{\mathbf{I}} \mathbf{B}\xrightarrow{\mathbf{Q}} \mathbf{B}/\mathbf{A}\rightarrow 0\end{align}
for example, is exact. In the language of category theory, we study the abelian category $\mathcal{A}b^\mathcal{\,N}$.

An abelian group $X$ together with $\mathbf{p}=\{p_f:X\rightarrow X_f\,|\,f\in\mathcal{N}\}$ is an inverse limit of $\mathbf{X}$ if
\begin{align}\label{e:2} p_f=p_{fg} p_g\text{ for all }f\leq g\in\mathcal{N} \end{align}
and for any $(Y,\mathbf{q})$ satisfying (\ref{e:2}) there exists a unique $q:Y\rightarrow X$ such that $\mathbf{p} q=\mathbf{q}$. Such an $X$ and $\mathbf{p}$ are unique up to isomorphism; we henceforth write $X=\text{lim}\,\mathbf{X}$ for the group alone. $X$ admits the following description:
\begin{align}\label{e:3}\text{lim}\,\mathbf{X}=\{\langle x_f\rangle\in\displaystyle\prod_{f\in\mathcal{N}}X_f\,|\,p_{fg}(x_g)=x_f\text{ for all }f\leq g\in\mathcal{N}\}\end{align}
Note that 
\begin{align*} \text{lim}\,\mathbf{B} & \cong \,\displaystyle\prod_{i\in\mathbb{N}}\,\prod_{j\in\mathbb{N}}\mathbb{Z} \\
\text{lim}\,\mathbf{A} & \cong \displaystyle\bigoplus_{i\in\mathbb{N}}\prod_{j\in\mathbb{N}}\mathbb{Z}\end{align*}
Returning to (\ref{e:3}), for $\mathbf{F}:\mathbf{X}\rightarrow\mathbf{Y}$, define $\text{lim}\,\mathbf{F}:\text{lim}\,\mathbf{X}\rightarrow\text{lim}\,\mathbf{Y}$ as the induced mapping of products. We define thereby a functor $\text{lim}:\mathcal{A}b^\mathcal{N}\rightarrow\mathcal{A}b$. We are interested in the following phenomenon: $\text{lim}$ applied to sequence (\ref{e:1}), for example, may fail to be exact. More precisely, $\text{lim}$ is left exact: $\text{lim}\,\mathbf{I}$ will be injective, but $\text{lim}\,\mathbf{Q}$ may fail to be surjective, to a degree the long exact sequence
\begin{align}\label{e:4}0\rightarrow \;\text{lim}\,\mathbf{A}\xrightarrow{\text{lim}\,\mathbf{I}} \text{lim}\,\mathbf{B}\xrightarrow{\text{lim}\,\mathbf{Q}} \text{lim}\,\mathbf{B}/\mathbf{A}
\xrightarrow{\theta_0} \text{lim}^1\,\mathbf{A}\xrightarrow{\text{lim}^1\,\mathbf{I}} \text{lim}^1\,\mathbf{B} \dots\end{align}
in some sense measures. The non-exactness of lim induces, in other words, a sequence of derived functors $\text{lim}^n$ connected, for any short exact sequence in $\mathcal{A}b^\mathcal{N}$, by a long exact sequence of abelian groups of the above form, with connecting transformations $\theta_n$. These functors $\text{lim}^n$, like $\text{lim}$, admit explicit description; see the proof of Theorem~\ref{limnA}, below. From this description, the reader may verify the following:

(i) For any constant system $\mathbf{X}=(X_f,p_{fg},\mathcal{N})$, i.e., any system with $X_f=X$ and $p_{fg}=id$ for all $f\leq g\in\mathcal{N}$, $\text{lim}^n\,\mathbf{X}=0$ for $n\geq 1$.

(ii) $\text{lim}^n\,\mathbf{B}=0$ for $n\geq 1$.

Returning to (\ref{e:4}): by (ii), $\text{lim}^1\,\mathbf{A}=0$ if and only if $\text{lim}\,\mathbf{Q}$ is surjective. To better articulate that equivalence, we introduce the following conventions, basic to all that follows:

For $f\in\mathcal{N}$, let $I_f=\{(i,j)\,|\,j\leq f(i)\}$. For $f,g\in\mathcal{N}$ write $f<^* g$ if $\{i\,|\,f(i)\nless g(i)\}$ is finite. Write $f\leq^* g$ if $\{i\,|\,f(i)\nleq g(i)\}$ is finite or, equivalently, if $I_f\subseteq^*I_g$. $\phi_f$ will denote a function of the form $I_f\rightarrow \mathbb{Z}$. Write $\phi=^*\psi$ if $\{x\in\text{dom}(\phi)\cap\text{dom}(\psi)\,|\, \phi(x)\neq\psi(x)\}$ is finite; note that this is not, in general, an equivalence relation. A collection $\Phi=\{\phi_f\,|\,f\in\mathcal{N}\}$ is \textit{coherent} if $\phi_f=^*\phi_g$ for all $f,g\in\mathcal{N}$. $\Phi$ is \textit{trivial} if there exists $\phi:\mathbb{N}^2\rightarrow\mathbb{Z}$ such that $\phi=^*\phi_f$ for all $f\in\mathcal{N}$. We may view any $\phi_f$ as an element of $B_f$; write $[\phi_f]$ for its image in $B_f/A_f$. We may view $\Phi$, likewise, as an element of $\prod_{\mathcal{N}}B_f$; writing $[\Phi]$ for $\{[\phi_f]\,|\,f\in\mathcal{N}\}$, then, $$\text{lim}\,\mathbf{B}/\mathbf{A}\cong\{[\Phi]\,|\,\Phi\text{ is coherent}\}$$
Hence $\text{lim}\,\mathbf{Q}$ is surjective if and only if every coherent $[\Phi]$ equals $\{[\phi\!\!\upharpoonright\!\!_{I_f}]\,|\,f\in\mathcal{N}\}$ for some $\phi:\mathbb{N}^2\rightarrow\mathbb{Z}$ in $\text{lim}\,\mathbf{B}$. In other words,
\begin{thm} \label{lim1A} \cite{MarPras} $\;\text{lim}^1\,\mathbf{A}=0$ if and only if every coherent family of functions $\Phi=\{\phi_f\,|\,f\in\mathcal{N}\}$ is trivial.
\end{thm}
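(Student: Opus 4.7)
The statement essentially amounts to unpacking the long exact sequence~(\ref{e:4}) in light of the description of $\text{lim}\,\mathbf{B}/\mathbf{A}$ sketched in the excerpt. The plan is first to invoke~(\ref{e:4}) together with property~(ii), i.e., $\text{lim}^1\mathbf{B}=0$; these yield the exact fragment $\text{lim}\,\mathbf{B}\xrightarrow{\text{lim}\,\mathbf{Q}}\text{lim}\,\mathbf{B}/\mathbf{A}\xrightarrow{\theta_0}\text{lim}^1\mathbf{A}\to 0$, so that $\text{lim}^1\mathbf{A}=0$ if and only if $\text{lim}\,\mathbf{Q}$ is surjective. This reduces the theorem to translating that surjectivity into the language of coherent and trivial families.

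Next I would verify in detail the identification $\text{lim}\,\mathbf{B}/\mathbf{A}\cong\{[\Phi]\mid\Phi\text{ is coherent}\}$. An element of $\text{lim}\,\mathbf{B}/\mathbf{A}$ is a compatible tuple $([\phi_f])_{f\in\mathcal{N}}$ in $\prod_f B_f/A_f$, so for every $f\leq g$ one has $\phi_g\!\upharpoonright_{I_f}-\phi_f\in A_f$; since $A_f=\bigoplus_i\mathbb{Z}^{f(i)}$ consists precisely of the finitely supported functions on $I_f$, this says $\phi_f=^*\phi_g\!\upharpoonright_{I_f}$. To bootstrap this to incomparable $f,g\in\mathcal{N}$, I would take a common upper bound $h\geq f,g$; then both $\phi_f=^*\phi_h\!\upharpoonright_{I_f}$ and $\phi_g=^*\phi_h\!\upharpoonright_{I_g}$ hold, and therefore $\phi_f=^*\phi_g$ on $I_f\cap I_g$. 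Conversely, coherence immediately yields the compatibility condition for comparable pairs, so the two notions coincide.

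Finally I would identify the image of $\text{lim}\,\mathbf{Q}$. Since $\text{lim}\,\mathbf{B}\cong\prod_{i,j}\mathbb{Z}$ is the group of functions $\phi:\mathbb{N}^2\to\mathbb{Z}$, the map $\text{lim}\,\mathbf{Q}$ sends $\phi$ to $\{[\phi\!\upharpoonright_{I_f}]\mid f\in\mathcal{N}\}$. Hence a coherent $[\Phi]$ lies in the image if and only if some global $\phi:\mathbb{N}^2\to\mathbb{Z}$ satisfies $\phi\!\upharpoonright_{I_f}=^*\phi_f$ for every $f\in\mathcal{N}$---precisely the condition that $\Phi$ is trivial. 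Combining this with the reduction of the first paragraph gives the theorem. The only point that requires any care is the passage between the inverse-limit compatibility condition (phrased for $f\leq g$) and the symmetric coherence condition (phrased for all pairs $f,g$), handled via a common upper bound as above; the rest is bookkeeping.
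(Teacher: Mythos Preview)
Your proposal is correct and follows exactly the paper's approach: the paper's ``proof'' is the paragraph of discussion preceding the theorem, which (i) uses the long exact sequence~(\ref{e:4}) together with $\text{lim}^1\mathbf{B}=0$ to reduce $\text{lim}^1\mathbf{A}=0$ to surjectivity of $\text{lim}\,\mathbf{Q}$, (ii) identifies $\text{lim}\,\mathbf{B}/\mathbf{A}$ with equivalence classes of coherent families, and (iii) reads off that the image of $\text{lim}\,\mathbf{Q}$ consists of the trivial ones. Your only addition is spelling out the common-upper-bound argument bridging the comparable-pair compatibility condition and the all-pair coherence condition, which the paper leaves implicit.
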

\noindent It's this observation we generalize in section 3.

We recall, finally, the following notions from set theory. The cofinality of a partial order $P$ is $$\text{cof}(P)=\text{min}\{|Q|\,|\,\text{for all }p\in P\text{ there exists a }q\in Q\text{ with }q\geq p\}$$The cofinality of an inverse system is the cofinality of its index-set. Observe that $\text{cof}(\mathcal{N},<)=\text{cof}(\mathcal{N},<^*)$. We write $\mathfrak{d}$ for either.$$\mathfrak{b}=\text{min}\{|\mathcal{F}|\,|\,\text{for all }g\in \mathcal{N}\text{ there exists an }f\in \mathcal{F}\text{ with }f\nless^* g\}$$ Symbols $\alpha,\beta,\xi$ denote ordinals; $\kappa$ denotes a cardinal. $[\kappa]^{<\kappa}=\{y\subset\kappa\,|\,\kappa>|y|\}$. For $A\subseteq\text{dom}(f)$, $f''A=\{f(a)\,|\,a\in A\}$. A cofinal subset $C$ of $\beta$ is \textit{club} if it is closed in $\beta$ under the topology induced by the membership relation. $S\subseteq\beta$ is \textit{stationary} if it intersects all club subsets of $\beta$.

\begin{defin} $\diamondsuit(S^2_1)$ is the assertion that there exists a family $\mathcal{S}=\{S_\beta\,|\,\beta<\omega_2\text{ and cof}(\beta)=\aleph_1\}$ such that, for any $A\subset\omega_2$, the set $\{\beta\,|\,A\cap\beta=S_\beta\}$ is stationary.\end{defin}

The reader may verify that the intersection of two club subsets of $\beta$ is a club and, hence, that the intersection of a club and a stationary subset of $\beta$ is stationary; these facts and the straightforward implication $\diamondsuit(S^2_1)\Rightarrow 2^{\aleph_0}\leq\aleph_2$ play a role in the proof of Theorem~\ref{lim2A}.

\section{Characterizing higher derived limits of $\mathbf{A}$}

Let $\mathcal{N}^{[n]}=\{(f_0,\dots,f_{n-1})\in\mathcal{N}^n\, |\, f_i\leq f_j\text{ for all }i<j\}$ for $n>0$, and let $\mathcal{N}^{[0]}=\{\varnothing\}$.
Let $\vec{f}^i$ denote the $(n-1)$-tuple obtained by deleting $f_i$ from $\vec{f}\in\mathcal{N}^{[n]}$; $\phi_{\vec{f}}$ will denote a function of the form $I_{f_0}\rightarrow \mathbb{Z}$ unless $\vec{f}=\varnothing$, in which case $\phi_{\vec{f}}:\mathbb{N}\times\mathbb{N}\rightarrow\mathbb{Z}$.

\begin{defin}\label{ncoh}
A collection $\Phi=\{\phi_{\vec{f}}\,|\,\vec{f}\in\mathcal{N}^{[n]}\}$ is \emph{n-coherent} if, for all $\vec{f}\in\mathcal{N}^{[n+1]}$,
$$\phi_{\vec{f}^0}\!\!\upharpoonright\!\!_{I_{f_0}}+\displaystyle\sum_{i=1}^{n}(-1)^i\phi_{\vec{f}^i}=^{*}0.$$
\end{defin}

For readability, we henceforth write such sums, simply, as $\displaystyle\sum_{i=0}^{n}(-1)^i\phi_{\vec{f}^i}$.

\begin{defin}\label{ntriv}
$\Phi$ is \emph{n-trivial} if there exists $\{\psi_{\vec{f}}\,|\,\vec{f}\in\mathcal{N}^{[n-1]}\}$ such that, for all $\vec{f}\in\mathcal{N}^{[n]}$,
$$\displaystyle\sum_{i=0}^{n-1}(-1)^i\psi_{\vec{f}^i}=^{*}\phi_{\vec{f}}.$$
\end{defin}

\begin{thm}\label{limnA} For $n>0$, $\text{lim}^n\,\mathbf{A}=0$ if and only if every $n$-coherent $\Phi=\{\phi_{\vec{f}}\,|\,\vec{f}\in\mathcal{N}^{[n]}\}$ is $n$-trivial.
\end{thm}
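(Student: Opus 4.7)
I would prove Theorem~\ref{limnA} by first developing the explicit cochain-complex description of $\text{lim}^n$ on $\mathcal{A}b^\mathcal{N}$, then applying it to the short exact sequence~(\ref{e:1}). In the paper's indexing, the relevant complex for a system $\mathbf{X} = (X_f, p_{fg}, \mathcal{N})$ has $C^n(\mathbf{X}) = \prod_{\vec{f} \in \mathcal{N}^{[n]}} X_{f_0}$ for $n \geq 1$, with coboundary
$$(d^n \Phi)(\vec{f}) = \sum_{i=0}^{n}(-1)^i \Phi(\vec{f}^i)$$
for $\vec{f} \in \mathcal{N}^{[n+1]}$, the $i=0$ summand understood as restricted from $X_{f_1}$ to $X_{f_0}$ via the bonding map (as the paper's summation convention suppresses). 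One verifies $H^n(C^\bullet(\mathbf{X})) \cong \text{lim}^n\mathbf{X}$ by the standard identification of derived inverse limit with \v{C}ech cohomology of the nerve of the directed poset $\mathcal{N}$ with coefficients in $\mathbf{X}$; observations (i) and (ii) of the excerpt fall out of this description immediately.

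Next, with the complex in hand, observation~(ii) and the long exact sequence~(\ref{e:4}) collapse to isomorphisms $\text{lim}^n\mathbf{A} \cong \text{lim}^{n-1}(\mathbf{B}/\mathbf{A})$ for all $n \geq 2$; the $n=1$ case is already Theorem~\ref{lim1A}. It therefore suffices to compute $\text{lim}^{n-1}(\mathbf{B}/\mathbf{A})$ concretely via the Roos complex on $\mathbf{B}/\mathbf{A}$ and match the answer to the stated criterion.

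Unpacking, an $(n-1)$-cochain in $C^\bullet(\mathbf{B}/\mathbf{A})$ is an assignment $\vec{f} \mapsto [\phi_{\vec{f}}] \in B_{f_0}/A_{f_0}$ for $\vec{f} \in \mathcal{N}^{[n]}$, and the cocycle condition at $\vec{g} \in \mathcal{N}^{[n+1]}$ reads $\sum_{i=0}^{n}(-1)^i \phi_{\vec{g}^i} \in A_{g_0}$. The key observation is that because each $f(i)$ is finite, an element of $A_{g_0}$, viewed as a function $I_{g_0} \to \mathbb{Z}$, has finite total support: it is supported in finitely many columns of $I_{g_0}$, each column itself finite. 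Hence the cocycle condition is precisely $\sum_{i=0}^{n}(-1)^i \phi_{\vec{g}^i} =^* 0$, namely $n$-coherence; and an analogous unwinding shows that coboundaries correspond to $n$-trivial families.

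The main obstacle I anticipate is foundational rather than combinatorial: rigorously justifying that the Roos complex above really does compute $\text{lim}^n$ on $\mathcal{A}b^\mathcal{N}$ over the uncountable directed poset $\mathcal{N}$. This requires either producing an explicit injective resolution of each $\mathbf{X}$ in $\mathcal{A}b^\mathcal{N}$ whose global sections yield $C^\bullet(\mathbf{X})$, or invoking the general theorem identifying derived inverse limits with \v{C}ech cohomology of the indexing nerve. Once that is in place, the remainder --- including the verification that the $=^*$ relation and the quotient structure in $B/A$ agree in the sense noted above --- is routine bookkeeping.
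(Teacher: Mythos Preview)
Your proposal is correct and rests on the same two ingredients as the paper's proof: the Roos--N\"obeling cochain complex computing $\text{lim}^n$, and the vanishing of $\text{lim}^*\mathbf{B}$. The packaging differs slightly. The paper never passes to $\mathbf{B}/\mathbf{A}$; instead it works with the inclusion $\mathcal{K}(\mathbf{A})\hookrightarrow\mathcal{K}(\mathbf{B})$, observes that $n$-coherence of $\Phi$ is precisely the condition $d^n\Phi\in K^n(\mathbf{A})$ and $n$-triviality the condition $\Phi-d^{n-1}\Psi\in K^{n-1}(\mathbf{A})$, and then does a short cochain chase using $\text{lim}^{n-1}\mathbf{B}=\text{lim}^n\mathbf{B}=0$ to match these against $H^n(\mathcal{K}(\mathbf{A}))=0$. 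Your route---invoking the long exact sequence of~(\ref{e:1}) abstractly to obtain $\text{lim}^n\mathbf{A}\cong\text{lim}^{n-1}(\mathbf{B}/\mathbf{A})$ and then reading off the Roos cohomology of $\mathbf{B}/\mathbf{A}$---is the same argument one level up: the paper's chase is exactly the hands-on verification of that connecting isomorphism. Your version is a bit more conceptual; the paper's is a bit more self-contained.
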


\begin{proof} Define cochain complex $\mathcal{K}(\mathbf{B}): 0\rightarrow K^0(\mathbf{B})\rightarrow K^1(\mathbf{B})\rightarrow\dots$ by
$$K^n(\mathbf{B})=\displaystyle\prod_{\vec{f}\in\mathcal{N}^{[n+1]}} B_{f_0}$$
with differential $d^n:K^{n-1}(\mathbf{B})\rightarrow K^n(\mathbf{B})$ defined, for $n>0$, by
$$(d^n\Psi)(\vec{f})=\displaystyle\sum_{i=0}^{n}(-1)^i\Psi(\vec{f}^i)$$
Analogously define $\mathcal{K}(\mathbf{A})$, a subcomplex of $\mathcal{K}(\mathbf{B})$. View any $\Phi$ as in the statement of the theorem as an element of $K^{n-1}(\mathbf{B})$; observe that $\Phi$ is $n$-coherent if and only if $d^n\Phi\in K^{n}(\mathbf{A})$, and is $n$-trivial if and only if $\Phi - d^{n-1}\Psi\in K^{n-1}(\mathbf{A})$ for some $\Psi\in K^{n-2}(\mathbf{B})$.

N\"obeling and Roos independently established that, in general, $\text{lim}^n\,\mathbf{X}\cong H^n\mathcal{K}(\mathbf{X})$ (see [Ma $\mathsection 11.5$] for proof; the reader may more immediately verify that $H^0(\mathcal{K}(\mathbf{X}))=\text{lim}\,\mathbf{X}$). In particular, $\text{lim}^n\,\mathbf{A}=0$ if and only if, in $\mathcal{K}(\mathbf{A})$, every $n$-cocycle is an $n$-coboundary. Assume the latter, and take $n\geq 2$ (the case $n=1$ was Theorem~\ref{lim1A}): if $\Phi$ is $n$-coherent, then $d^n\Phi\in K^n(\mathbf{A})$ is an $n$-cocycle and hence, by assumption, equals $d^n\Upsilon$ for some $\Upsilon\in K^{n-1}(\mathbf{A})$. Since $\text{lim}^{n-1}\,\mathbf{B}=0$, cocycle $(\Phi-\Upsilon)$ equals $d^{n-1}\Psi$ for some $\Psi\in K^{n-2}(\mathbf{B})$; in other words, $\Phi - d^{n-1}\Psi\in K^{n-1}(\mathbf{A})$, i.e., $\Phi$ is $n$-trivial.

On the other hand, if every $n$-coherent $\Phi$ is $n$-trivial, take $n$-cocycle $\Upsilon\in K^n(\mathbf{A})$. Since $\text{lim}^{n-1}\,\mathbf{B}=0$, $\Upsilon=d^n\Phi$ for some $\Phi\in K^{n-1}(\mathbf{B})$. $\Phi$ is $n$-coherent, so by assumption, $\Phi-d^{n-1}\Psi\in K^{n-1}(\mathbf{A})$ for some $\Psi\in K^{n-2}(\mathbf{B})$; hence $\Upsilon=d^n(\Phi-d^{n-1}\Psi)$ is an $n$-coboundary in $\mathcal{K}(\mathbf{A})$.
\end{proof}
We will sometimes consider systems indexed by orders extending or contained in $\mathcal{N}$; the appropriate modification of definitions should in such cases be obvious.

Early indications of the relevance of set-theoretic considerations to higher derived limits were the following:

\begin{thm} \cite{Gob} \label{limkA} For any inverse system $\mathbf{X}$ of cofinality $\aleph_k$, $\text{lim}^n\,\mathbf{X}=0$ for all $n\geq k+2$.
\end{thm}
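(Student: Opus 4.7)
The plan is to apply the N\"obeling--Roos description $\text{lim}^n\,\mathbf{X}\cong H^n\mathcal{K}(\mathbf{X})$ recalled in the proof of Theorem~\ref{limnA}, reduce to a cofinal subsystem of cardinality $\aleph_k$, and induct on $k$. Since derived limits depend only on cofinal subsystems, one may assume the index set $P$ of $\mathbf{X}$ has cardinality $\aleph_k$.

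The base case $k=0$ is the classical Mittag-Leffler--Roos result: a countable directed $P$ admits a cofinal copy of $\omega$, along which $\mathbf{X}$ becomes an inverse sequence, and the two-term resolution $0\to\text{lim}\,\mathbf{X}\to\prod_n X_n\xrightarrow{\mathrm{shift}-1}\prod_n X_n\to 0$ yields $\text{lim}^n\,\mathbf{X}=0$ for $n\geq 2$.

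For the inductive step, assume the result whenever cofinality is at most $\aleph_{k-1}$. Enumerate $P=\{p_\xi:\xi<\omega_k\}$ and, by a standard L\"owenheim--Skolem-type closure, build a continuous $\subseteq$-increasing chain $\langle P_\alpha:\alpha<\omega_k\rangle$ of directed subsets of $P$ with $|P_\alpha|<\aleph_k$, $p_\alpha\in P_{\alpha+1}$, and $\bigcup_\alpha P_\alpha=P$. Each $P_\alpha$ then has cofinality $\leq\aleph_{k-1}$, so by induction $\text{lim}^q\,\mathbf{X}|_{P_\alpha}=0$ for all $q\geq k+1$. Viewing $\text{lim}_P$ as the composition of $G:\mathbf{X}\mapsto(\text{lim}_{P_\alpha}\,\mathbf{X}|_{P_\alpha})_{\alpha<\omega_k}\in\mathcal{A}b^{\omega_k}$ with $F=\text{lim}_{\omega_k}$, the Grothendieck spectral sequence
$$E_2^{p,q}=\text{lim}^p_{\omega_k}\bigl(\text{lim}^q\,\mathbf{X}|_{P_\alpha}\bigr)_\alpha\;\Longrightarrow\;\text{lim}^{p+q}_P\,\mathbf{X}$$
has $E_2^{p,q}=0$ for $q\geq k+1$ by the inductive hypothesis, and $E_2^{p,q}=0$ for $p\geq 2$ because $\omega_k$ is well-ordered: any inverse system indexed by a well-ordered set admits a length-one injective resolution generalizing the Mittag-Leffler resolution of the base case. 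Hence $\text{lim}^n\,\mathbf{X}=0$ for $n\geq k+2$.

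The principal obstacle is verifying the two inputs to the spectral sequence beyond the $\omega$-indexed case: the applicability of the Grothendieck spectral sequence itself (which requires checking that $G$ carries injectives in $\mathcal{A}b^P$ to $F$-acyclic systems in $\mathcal{A}b^{\omega_k}$, provable via the compatibility of injective resolutions with restriction to cofinal subsets) and the vanishing $\text{lim}^p_{\omega_k}=0$ for $p\geq 2$. The latter rests on a transfinite extension of the Mittag-Leffler two-term resolution to well-orders of uncountable length; the construction is a bookkeeping exercise of the same flavor as, but substantially lighter than, those employed for the higher $\text{lim}^n\,\mathbf{A}$ in section 4.
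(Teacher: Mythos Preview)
The paper does not prove this theorem; it is quoted from \cite{Gob} (see also \cite{Jen}) and used as a black box in Corollary~\ref{limk+1A}. So there is no in-paper argument to compare against; I can only comment on the soundness of your outline relative to the standard proof.

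Your spectral-sequence route is valid in principle, but it passes through a lemma --- $\text{lim}^p_{\omega_k}=0$ for all $p\geq 2$ --- that you seriously undersell. The two-term Mittag-Leffler resolution does \emph{not} extend to uncountable well-orders by naive transfinite recursion: over $\omega_1$, a system with surjective bonding maps need not be $\text{lim}$-acyclic, because at a limit stage $\lambda$ the map $X_\lambda\to\text{lim}_{\alpha<\lambda}X_\alpha$ can fail to surject, and this obstructs the recursive construction of a coboundary. The vanishing of $\text{lim}^{\geq 2}$ over arbitrary linear orders is a genuine lemma, not bookkeeping. Your justification of the Grothendieck hypothesis is also off: the $I_\alpha$ are not cofinal in $I$, so ``compatibility with restriction to cofinal subsets'' is the wrong tool; what is needed is that restriction along $I_\alpha\hookrightarrow I$ have an exact left adjoint, which requires each slice $\{q\in I_\alpha:q\leq p\}$ to be directed --- something your L\"owenheim--Skolem closure does not obviously arrange.

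More to the point, both detours are unnecessary. Goblot's actual argument is a direct transfinite recursion on the Roos complex that never invokes the linear-order lemma. Given an $n$-cocycle $c$ with $n\geq k+2$, build by recursion on $\alpha<\omega_k$ a coherent family of $(n-1)$-cochains $b^{(\alpha)}$ on $I_\alpha$ with $db^{(\alpha)}=c|_{I_\alpha}$: at successors, take any trivialization $b'$ on $I_{\alpha+1}$, note that $b'|_{I_\alpha}-b^{(\alpha)}$ is an $(n-1)$-cocycle on a set of cofinality $\leq\aleph_{k-1}$, hence by the inductive hypothesis (degree $n-1\geq k+1$) equals $da$, and set $b^{(\alpha+1)}=b'-d\tilde a$ for any extension $\tilde a$ of $a$; at limits, the continuity $I_\lambda=\bigcup_{\alpha<\lambda}I_\alpha$ lets the coherent family glue directly. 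The union over all $\alpha<\omega_k$ is then a global coboundary for $c$. No appeal to $\text{lim}^{\geq 2}_{\omega_k}=0$, and no spectral sequence, is required.
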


\begin{thm} \cite{Mitch} For every $k\geq 0$ there exists an inverse system $\mathbf{X}$ of cofinality $\aleph_k$ with $\text{lim}^{k+1}\,\mathbf{X}\neq 0$.
\end{thm}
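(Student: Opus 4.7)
The plan is to proceed by induction on $k$, using the cochain-complex characterization $\text{lim}^n\,\mathbf{X}\cong H^n\mathcal{K}(\mathbf{X})$ from the proof of Theorem~\ref{limnA} as the main computational tool.

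For the base case $k=0$, I would take the countable inverse system $\mathbf{X}=(X_n,p_{nm},\omega)$ with $X_n=\mathbb{Z}$ and $p_{nm}$ equal to multiplication by $2^{m-n}$. Directly from the cochain complex, $\text{lim}^1\,\mathbf{X}$ is the cokernel of the map $\prod_n\mathbb{Z}\to\prod_n\mathbb{Z}$ sending $(a_n)$ to $(a_n-2a_{n+1})$, and this cokernel is isomorphic to the (nonzero) quotient of the $2$-adic integers by $\mathbb{Z}$.

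For the inductive step, assume a system $\mathbf{X}$ on a poset $P$ with $\text{cof}(P)=\aleph_{k-1}$ and $\text{lim}^k\,\mathbf{X}\neq 0$. I would build a system $\mathbf{Y}$ on the product order $P\times Q$ for a suitable $Q$ of cofinality $\aleph_k$. Since $\text{cof}(P\times Q)=\max(\text{cof}(P),\text{cof}(Q))=\aleph_k$, this yields the correct cofinality. The system $\mathbf{Y}$ would be the outer tensor product $Y_{(p,q)}=X_p\otimes Z_q$ for an inverse system $\mathbf{Z}$ on $Q$ with $\text{lim}^1\,\mathbf{Z}\neq 0$. A K\"unneth-type argument applied to the double complex $\mathcal{K}(\mathbf{X})\otimes\mathcal{K}(\mathbf{Z})$ should then yield a non-zero contribution from $\text{lim}^k\,\mathbf{X}\otimes\text{lim}^1\,\mathbf{Z}$ to $\text{lim}^{k+1}\,\mathbf{Y}$, as required.

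The main obstacle is exhibiting, in ZFC alone, a poset $Q$ of cofinality exactly $\aleph_k$ carrying an inverse system with $\text{lim}^1\neq 0$: well-ordered indices of uncountable cofinality tend to force $\text{lim}^1=0$ on naturally-defined systems, so some care is needed. One promising option is to take $Q=[\omega_k]^{<\omega}$, the poset of finite subsets of $\omega_k$ under inclusion --- whose cofinality is $\aleph_k$ since any cofinal subset must absorb singletons --- and to produce a non-trivial $1$-cocycle on $Q$ by adapting the base-case construction along a carefully chosen $\omega$-chain inside it. A secondary obstacle is verifying the K\"unneth formula --- straightforward when, as in our construction, all groups involved are free abelian --- and confirming that the resulting non-vanishing class in $\text{lim}^{k+1}\,\mathbf{Y}$ survives the identifications coming from the double complex.
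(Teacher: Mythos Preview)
The paper does not prove this theorem; it is quoted from \cite{Mitch} without argument, serving only to complement Theorem~\ref{limkA}. There is therefore no proof in the paper to compare your proposal against.

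Regarding the proposal itself: the base case is correct, but the inductive step has a genuine gap. Your scheme requires, at stage $k$, a directed poset $Q$ with $\text{cof}(Q)=\aleph_k$ carrying a system $\mathbf{Z}$ with $\text{lim}^1\,\mathbf{Z}\neq 0$. This is \emph{not} furnished by the induction hypothesis (which only gives cofinality $\aleph_{k-1}$ and $\text{lim}^k\neq 0$), so it is a fresh construction you must perform for every $k$. Your suggestion---take $Q=[\omega_k]^{<\omega}$ and ``adapt the base-case construction along a carefully chosen $\omega$-chain''---does not accomplish this: for $k\geq 1$ no $\omega$-chain is cofinal in $[\omega_k]^{<\omega}$, and a $1$-cocycle supported on such a chain is typically a coboundary once one exploits the ambient directedness of $Q$ (any two finite sets have a common upper bound, which lets one propagate a trivialization). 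You have correctly identified this as the main obstacle, but the sketch does not overcome it; the induction has displaced the difficulty rather than reduced it.

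A secondary gap is the K\"unneth step. You need $\mathcal{K}(\mathbf{X}\boxtimes\mathbf{Z})$ and $\mathcal{K}(\mathbf{X})\otimes\mathcal{K}(\mathbf{Z})$ to be quasi-isomorphic, but chains in the product order $P\times Q$ are not pairs of chains in $P$ and in $Q$ (a chain in $P\times Q$ can move in both coordinates simultaneously). An Eilenberg--Zilber type comparison is required here, and while such results exist in this setting, it is not the routine verification you suggest, and ``all groups free abelian'' addresses the Tor terms but not this structural issue.
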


\begin{cor} \label{limk+1A} If $\mathfrak{d}=\aleph_k$, then $\text{lim}^n\,\mathbf{A}=0$ for all $n\geq k+1$.
\end{cor}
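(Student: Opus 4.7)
The plan is to apply Theorem~\ref{limkA} to $\mathbf{A}$ and then close the gap at $n=k+1$ by exploiting the $\sigma$-directedness of $\mathcal{N}$. Since $\mathbf{A}$ has cofinality $\mathfrak{d}=\aleph_k$, Theorem~\ref{limkA} immediately yields $\text{lim}^n\,\mathbf{A}=0$ for all $n\geq k+2$. For the borderline case $n=k+1$, cofinality alone cannot suffice, since Mitchell's counterexamples attain $\text{lim}^{k+1}\neq 0$ at cofinality $\aleph_k$; the improvement must therefore lean on some feature that distinguishes $\mathbf{A}$. That feature is the $\sigma$-directedness of $\mathcal{N}$: any countable $\{f_n\}_{n\in\mathbb{N}}\subset\mathcal{N}$ is dominated in $\mathcal{N}$ by, e.g., $f(i)=\max_{n\leq i} f_n(i)+1$.

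Concretely, I would use $\sigma$-directedness to refine a $\leq$-cofinal subset of $\mathcal{N}$ of cardinality $\aleph_k$ into a $\leq$-increasing chain $\{g_\alpha\,|\,\alpha<\omega_k\}$, still cofinal in $\mathcal{N}$, arranged so that $g_\alpha$ dominates $\{g_\beta\,|\,\beta<\alpha\}$ whenever $\text{cof}(\alpha)=\omega$. Since cofinal sub-systems compute the same $\text{lim}^n$, it suffices to prove the vanishing for $\mathbf{A}$ restricted to this chain. By Theorem~\ref{limnA}, the task reduces to showing that every $(k+1)$-coherent family $\Phi$ indexed over the chain is $(k+1)$-trivial. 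I would build the trivializing $k$-cochain $\Psi=\{\psi_{\vec{g}}\}$ by transfinite recursion on $\alpha<\omega_k$: at successor stages, the $\psi$-values at newly-included tuples are forced up to a finite correction by the coherence relations of $\Phi$; at limits of uncountable cofinality below $\omega_k$, I invoke the inductive hypothesis on strictly shorter sub-chains; at limits of cofinality $\omega$, I glue the countably many established partial trivializations using the $\sigma$-directed upper bound at $g_\alpha$.

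The principal obstacle is the countable-cofinality limit step. A countable union of finite $=^*$-exceptions need not be finite, so the exceptional sets must be carefully controlled across the recursion---at each stage one must choose $\psi$-values so that the cumulative $=^*$-error on each relevant domain $I_{g_0}$ remains finite. This bookkeeping of finite exceptional sets across transfinite constructions is the set-theoretic substrate of every such vanishing argument, and anticipates the more elaborate techniques of sections 3 and 4.
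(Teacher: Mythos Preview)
Your reduction to a cofinal chain is where the argument breaks. Refining an $\aleph_k$-sized cofinal family to a $\leq^*$-increasing $\omega_k$-chain that is still cofinal is exactly the assertion that an $\omega_k$-scale exists, and this is equivalent to $\mathfrak{b}=\mathfrak{d}=\aleph_k$, not to $\mathfrak{d}=\aleph_k$ alone. (At a limit stage $\alpha<\omega_k$ of uncountable cofinality you need a $<^*$-upper bound for $\{g_\beta:\beta<\alpha\}$, and $\sigma$-directedness does not provide one; your own candidate bound $f(i)=\max_{n\leq i}f_n(i)+1$ only dominates modulo finite, and only handles countable collections.) It is consistent, for instance, that $\mathfrak{b}=\aleph_1$ while $\mathfrak{d}=\aleph_2$, and in such a model no $\omega_2$-scale exists, so your construction cannot begin. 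Even under the extra hypothesis $\mathfrak{b}=\mathfrak{d}$, the recursion you sketch is not yet a proof: you identify the countable-cofinality limit step as the obstacle but do not indicate how to control the accumulating finite errors there.

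The paper avoids all of this with a short homological trick. It writes $\mathbf{A}$ as a quotient in a short exact sequence $0\to\mathbf{D}\to\mathbf{E}\to\mathbf{A}\to 0$, where $\mathbf{E}$ is the constant system of finitely supported functions $\mathbb{N}^2\to\mathbb{Z}$ and $D_f$ consists of those supported on $\mathbb{N}^2\setminus I_f$. Since $\mathbf{E}$ is constant, $\text{lim}^{k+1}\mathbf{E}=0$; since $\mathbf{D}$ has the same index set, Theorem~\ref{limkA} gives $\text{lim}^{k+2}\mathbf{D}=0$; the long exact sequence then squeezes $\text{lim}^{k+1}\mathbf{A}$ to zero. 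The point is that the ``extra'' vanishing degree for $\mathbf{A}$ is purchased not by $\sigma$-directedness of the index set but by the existence of this resolution by a constant system---a feature specific to $\mathbf{A}$ that Mitchell's examples lack.
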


\begin{proof} Immediate, by Theorem~\ref{limkA}, for $n>k+1$. Let $\mathbf{D}=(D_f,p^d_{fg},\mathcal{N})$, with $D_f=\{\phi:\mathbb{N}^2\backslash I_f\rightarrow\mathbb{Z}\, |\,\text{supp}(\phi)\text{ is finite}\}$ and $p^d_{fg}$ the inclusion map; let $\mathbf{E}=(E_f,p^e_{fg},\mathcal{N})$, with $E_f=\{\phi:\mathbb{N}^2\rightarrow\mathbb{Z}\, |\,\text{supp}(\phi)\text{ is finite}\}$ and $p^e_{fg}$ the identity. Form short exact sequence
$$0\rightarrow\mathbf{D}\rightarrow\mathbf{E}\rightarrow\mathbf{A}\rightarrow 0$$
inducing long exact sequence
$$\dots\rightarrow\text{lim}^{k+1}\,\mathbf{E}\rightarrow\text{lim}^{k+1}\,\mathbf{A}\rightarrow\text{lim}^{k+2}\,\mathbf{D}\rightarrow\dots$$
As noted, for $k\geq 0$, $\text{lim}^{k+1}\,\mathbf{E}=0$, so by Theorem~\ref{limkA}, $0=\text{lim}^{k+2}\,\mathbf{D}=\text{lim}^{k+1}\,\mathbf{A}$.
\end{proof}

By the corollary, together with the following theorem, $\mathfrak{d}=\aleph_1$ fully determines when $\text{lim}^n\,\mathbf{A}=0$.

\begin{thm} \label{NEQ} \cite{DSV} If $\mathfrak{d}=\aleph_1$ then $\text{lim}^1\,\mathbf{A}\neq0$.
\end{thm}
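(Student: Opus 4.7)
The plan is as follows. By Theorem~\ref{lim1A}, it suffices to exhibit a non-trivial coherent family $\Phi=\{\phi_f\,|\,f\in\mathcal{N}\}$. The hypothesis $\mathfrak{d}=\aleph_1$ furnishes a $<^*$-increasing, $<^*$-cofinal chain $\langle f_\alpha:\alpha<\omega_1\rangle$ in $\mathcal{N}$ (noting that $\mathfrak{d}=\aleph_1$ implies $\mathfrak{b}=\aleph_1$, so that the recursive thinning to a $<^*$-increasing subchain goes through). Since every $f\in\mathcal{N}$ satisfies $f\leq^* f_\alpha$ for some $\alpha$, a coherent family indexed by this chain extends, consistently modulo finite, to a coherent family indexed by $\mathcal{N}$, with triviality preserved. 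So the task reduces to building a non-trivial coherent $\{\phi_{f_\alpha}:\alpha<\omega_1\}$ along the chain.

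I would construct $\phi_{f_\alpha}:I_{f_\alpha}\to\mathbb{Z}$ by transfinite recursion. Set $\phi_{f_0}=0$. At successor $\alpha+1$, extend $\phi_{f_\alpha}$ verbatim on $I_{f_\alpha}\cap I_{f_{\alpha+1}}$ and use the remaining freedom on the slab $I_{f_{\alpha+1}}\setminus I_{f_\alpha}$ to install a perturbation: a value $c_\alpha\in\mathbb{Z}$ at a designated point $(i_\alpha,j_\alpha)$ of the slab. At a limit $\alpha$ with cofinal $\omega$-sequence $\alpha_n\uparrow\alpha$: choose $k_n\in\mathbb{N}$ strictly increasing and large enough that, for each $m<n$, the finite exception-set witnessing $\phi_{f_{\alpha_m}}=^*\phi_{f_{\alpha_n}}\upharpoonright I_{f_{\alpha_m}}$ lies in $[0,k_n)\times\mathbb{N}$, and such that $f_{\alpha_n}(i)\leq f_\alpha(i)$ for $i\geq k_n$; then define $\phi_{f_\alpha}(i,j)=\phi_{f_{\alpha_{n(i)}}}(i,j)$ whenever $(i,j)\in I_{f_{\alpha_{n(i)}}}$, where $n(i):=\max\{n:k_n\leq i\}$, and $\phi_{f_\alpha}(i,j)=0$ otherwise. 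A routine check yields $\phi_{f_\alpha}\upharpoonright I_{f_{\alpha_m}}=^*\phi_{f_{\alpha_m}}$ for each $m$, preserving coherence with all earlier stages.

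For non-triviality, I would argue by contradiction. Suppose $\phi:\mathbb{N}^2\to\mathbb{Z}$ satisfies $\phi\upharpoonright I_{f_\alpha}=^*\phi_{f_\alpha}$ for every $\alpha$. The finite exception-set $E_\alpha:=\{(i,j)\in I_{f_\alpha}:\phi(i,j)\neq\phi_{f_\alpha}(i,j)\}$ has ``$i$-envelope'' $N(\alpha):=\max\{i:(i,\cdot)\in E_\alpha\}<\omega$, so pigeonhole yields a stationary $S\subseteq\omega_1$ on which $N(\alpha)$ equals some constant $N$. The successor perturbations, placed at points $(i_\alpha,j_\alpha)$ with $i_\alpha>N$, then propagate by coherence to force $\phi(i_\alpha,j_\alpha)=c_\alpha$ for each $\alpha\in S$. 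A second pigeonhole on $(i_\alpha,j_\alpha,c_\alpha)\in\mathbb{N}^2\times\mathbb{Z}$ pins this triple to a constant on a stationary $S'\subseteq S$; arranging the perturbation-scheme so that this constancy is impossible (for instance, by choosing $i_\alpha$ large and $c_\alpha$ varying as $\alpha$ ranges over $\omega_1$) yields the desired contradiction.

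The principal obstacle will be the interlock between the successor perturbations and the limit-stage diagonalization: each perturbation $(i_\alpha,j_\alpha,c_\alpha)$ installed at stage $\alpha+1$ must remain visible modulo finite in every subsequent $\phi_{f_\beta}$, so the $k_n$ at each limit must be chosen large enough to respect all prior successor choices. It is here that the scale structure provided by $\mathfrak{d}=\aleph_1$—rather than the bare existence of an $\omega_1$-cofinal subset of $\mathcal{N}$—is used essentially: the genuine diagonal growth of the $f_\alpha$ is what allows the successive perturbations to accumulate into an obstruction incompatible with any single global $\phi$.
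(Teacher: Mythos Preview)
The paper does not give its own proof of Theorem~\ref{NEQ}; it is attributed to \cite{DSV}, and in Section~4 the paper observes that it is really a consequence of the ZFC Theorem~\ref{lim1AF} (every $\omega_1$-chain carries a nontrivial coherent family), which is in turn cited as a recasting of the Hausdorff-gap construction in \cite{Bek}. Your reduction --- pass to an $\omega_1$-scale using $\mathfrak{b}\leq\mathfrak{d}=\aleph_1$, build a nontrivial coherent family along it, then extend to all of $\mathcal{N}$ by cofinality --- matches this outline exactly.

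The genuine gap is in your non-triviality argument. You fix a stationary $S$ on which $N(\alpha)=N$ and then assert that the perturbation installed at stage $\alpha+1$ forces $\phi(i_\alpha,j_\alpha)=c_\alpha$ for $\alpha\in S$. But $(i_\alpha,j_\alpha)\in I_{f_{\alpha+1}}\setminus I_{f_\alpha}$, so the hypothesis $N(\alpha)=N$ says nothing about that point: you would need $\alpha+1\in S$, and $S$ may consist entirely of limit ordinals. More fundamentally, a single-point perturbation cannot ``propagate by coherence'': the relation $\phi_{f_\beta}=^*\phi_{f_{\alpha+1}}$ on $I_{f_{\alpha+1}}$ allows $(i_\alpha,j_\alpha)$ to fall into the finite exception set at every later $\beta$, so the value $c_\alpha$ need not survive to any $\phi_{f_\beta}$ with $\beta\in S$, let alone to $\phi$. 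Your second pigeonhole therefore has no input to act on.

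The classical construction the paper invokes avoids this by replacing pointwise perturbations with a Hausdorff-type gap condition maintained along the recursion: at stage $\alpha$ one arranges that for every $n$ only finitely many $\gamma<\alpha$ have their entire disagreement with $\phi_{f_\alpha}$ confined to the first $n$ columns. This is a condition on infinite sets, hence stable under $=^*$, and a pressing-down argument against a putative trivializer then yields an uncountable set violating it. Your recursive framework and limit-stage diagonalization are essentially correct; what is missing is substituting this gap condition for the single-point scheme.
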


\section{PFA and $\text{lim}^2\,\mathbf{A}$}

By the following theorem, PFA fully determines when $\text{lim}^n\,\mathbf{A}=0$, as well - but in a different direction.

\begin{thm} \label{lim2A} If $\mathfrak{b}=\mathfrak{d}=\aleph_2$ and $\diamondsuit(S^2_1)$ then $\text{lim}^2\mathbf{A}\neq 0$.
\end{thm}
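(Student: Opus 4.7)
\emph{Proof sketch.} By Theorem~\ref{limnA}, it suffices to construct a 2-coherent family $\Phi = \{\phi_{\vec{f}}\mid \vec{f} \in \mathcal{N}^{[2]}\}$ that is not 2-trivial. Since $\mathfrak{b} = \mathfrak{d} = \aleph_2$, fix a $<^*$-increasing scale $(g_\alpha)_{\alpha < \omega_2}$ cofinal in $(\mathcal{N}, <^*)$. The plan is to build $\phi_{\alpha\beta} := \phi_{(g_\alpha, g_\beta)} : I_{g_\alpha} \to \mathbb{Z}$ for $\alpha < \beta < \omega_2$ by recursion on $\beta$, maintaining 2-coherence on every scale-triple; a standard extension argument --- analogous to passing from a scale to $\mathcal{N}$ in the $\text{lim}^1$ setting --- then produces $\Phi$ on all of $\mathcal{N}^{[2]}$, inheriting non-triviality from its scale-restriction.

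The recursion splits into three cases. At successor stages $\beta = \beta_0 + 1$, choose $\phi_{\beta_0\beta}$ freely; the remaining $\phi_{\alpha\beta}$ for $\alpha < \beta_0$ are pinned down up to finite modification by 2-coherence. At limits of cofinality $\omega$, amalgamate along a cofinal $\omega$-sequence using the finite-modification technique behind Theorem~\ref{lim1A}. At limits $\beta \in S^2_1$ I invoke $\diamondsuit(S^2_1)$. Since $\diamondsuit(S^2_1)$ implies $2^{\aleph_0} \leq \aleph_2$, any sequence $(\psi_\alpha)_{\alpha<\omega_2}$ of functions $\psi_\alpha : I_{g_\alpha} \to \mathbb{Z}$ admits a code $A \subset \omega_2$ for which $A \cap \beta$ recovers $(\psi_\alpha)_{\alpha<\beta}$. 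If $S_\beta$ so codes a coherent partial trivialization of the scale-family so far --- i.e.\ $\psi_{\alpha_1}\!\!\upharpoonright\!\!_{I_{g_{\alpha_0}}} - \psi_{\alpha_0} =^* \phi_{\alpha_0\alpha_1}$ for all $\alpha_0 < \alpha_1 < \beta$ --- then choose $(\phi_{\alpha\beta})_{\alpha<\beta}$ 2-coherently so that the induced coherent family $h_\alpha := \psi_\alpha + \phi_{\alpha\beta}$ admits no $\psi : I_{g_\beta} \to \mathbb{Z}$ with $\psi\!\!\upharpoonright\!\!_{I_{g_\alpha}} =^* h_\alpha$ for all $\alpha<\beta$. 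Otherwise, make any 2-coherent choice.

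Non-triviality then falls out of the diamond: if $(\psi_f)_{f\in\mathcal{N}}$ witnessed 2-triviality of $\Phi$, its scale-restriction $(\psi_{g_\alpha})_{\alpha<\omega_2}$ would be coded by some $A \subset \omega_2$, and $\diamondsuit(S^2_1)$ would supply a stationary --- hence nonempty --- set of $\beta$ with $A \cap \beta = S_\beta$; at any such $\beta$ the construction would preclude any $\psi : I_{g_\beta} \to \mathbb{Z}$ realizing the coherent class $(\psi_{g_\alpha} + \phi_{\alpha\beta})_{\alpha<\beta}$, contradicting the behavior of $\psi_{g_\beta}$. The hard part is the cofinality-$\aleph_1$ step itself: given 2-coherent prior data and a guessed coherent partial trivialization, one must produce a 2-coherent extension $(\phi_{\alpha\beta})_{\alpha<\beta}$ such that no $\psi_\beta$ on $I_{g_\beta}$ realizes the induced coherent class $(h_\alpha)_{\alpha<\beta}$. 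I expect this to reduce to a localized DSV-style diagonal argument along a cofinal $\omega_1$-sequence in $\beta$, enumerating candidate trivializers via $2^{\aleph_0} \leq \aleph_2$ and exploiting $\mathfrak{b} = \aleph_2$ to secure, at each countable sub-stage, enough coordinatewise freedom to defeat the next candidate without breaking 2-coherence with earlier stages. Reconciling this local diagonalization with the global 2-coherence demands from the $\omega_2$-recursion is the main knot to unpick.
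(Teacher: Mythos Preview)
Your overall architecture matches the paper's: recurse along an $\omega_2$-scale, use $\diamondsuit(S^2_1)$ to anticipate putative trivializations, and at cofinality-$\aleph_1$ stages arrange the extension so that the guessed trivialization cannot continue. Where you diverge from the paper is exactly at the step you flag as ``the main knot to unpick,'' and the paper's resolution is both simpler and different in kind from what you propose.

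The observation you are missing is this: once $(\phi_{\alpha'\alpha''})_{\alpha',\alpha''<\beta}$ is fixed, any two 2-coherent choices of $(\phi_{\alpha\beta})_{\alpha<\beta}$ differ by a 1-coherent family on $\{g_\alpha:\alpha<\beta\}$. So the coherent families $(h_\alpha)_{\alpha<\beta}$ you can produce range over an entire coset of the group of 1-coherent families modulo trivial ones. You therefore do \emph{not} need to diagonalize against candidate $\psi:I_{g_\beta}\to\mathbb{Z}$ one at a time; you need only exhibit a single nontrivial 1-coherent family on the $\omega_1$-chain below $\beta$ and add it to the guessed $(\psi_\alpha)$. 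That is Theorem~\ref{lim1AF} --- the ZFC existence of a nontrivial coherent family on any $\omega_1$-chain, essentially a Hausdorff gap. Observation~1 then says no nontrivial coherent family extends to a $\leq^*$-upper bound, which defeats every $\psi$ on $I_{g_\beta}$ simultaneously. The paper packages this as: set $\Upsilon_\beta$ equal to the guessed trivialization plus a nontrivial $\Psi_1^{\mathcal{F}_\beta}$, extend $\Upsilon_\beta$ arbitrarily to $\Upsilon_1^{\beta+1}$, and define the new $\phi_{fg}$ as $\upsilon_g\!\!\upharpoonright_{I_f}\!-\,\upsilon_f$; if some $\Phi_1$ globally 2-trivializes, Observation~2 makes $\Upsilon_1^{\beta+1}-\Phi_1$ a 1-coherent family on $\mathcal{F}_{\beta+1}$ restricting to $\Psi_1^{\mathcal{F}_\beta}$, contradicting Observation~1.

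Your proposed diagonalization, by contrast, is both unnecessary and problematic as stated: under the hypotheses there may be $2^{\aleph_0}=\aleph_2$ candidate functions $\psi:I_{g_\beta}\to\mathbb{Z}$, too many to enumerate along an $\omega_1$-sequence, and $\mathfrak{b}=\aleph_2$ plays no role at this step --- in the paper it is used only, together with $\mathfrak{d}=\aleph_2$, to secure the scale. The missing ingredient is precisely Theorem~\ref{lim1AF}.
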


\begin{cor} \label{PFA} Under the Proper Forcing Axiom, $\text{lim}^n\mathbf{A}\neq 0$ if and only if $n=2$.
\end{cor}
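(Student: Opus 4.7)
The plan is to assemble this corollary directly from the three theorems already available: Theorem~\ref{NEQ} (or rather its PFA-side counterpart, due to Dow-Simon-Vaughan), Theorem~\ref{lim2A}, and Corollary~\ref{limk+1A}. The work is entirely set-theoretic bookkeeping: I would simply check that PFA delivers each of the cardinal-invariant and combinatorial hypotheses needed to feed those theorems, and then partition the range of $n$ accordingly. Nothing new needs to be proved about $\mathbf{A}$ itself.

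First I would recall the standard consequences of PFA that are relevant here. PFA implies $\text{MA}_{\aleph_1}$, which in turn forces $\mathfrak{b} = \aleph_2$ and $\mathfrak{d} = \aleph_2$ (indeed $2^{\aleph_0} = \aleph_2$). Moreover, PFA implies $\diamondsuit(S^2_1)$; this is a well-known consequence (attributable to work of Todor\v{c}evi\'c). These facts are purely set-theoretic inputs and require no further argument within this paper.

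Now I would split into three cases. For $n=1$: the Dow-Simon-Vaughan theorem cited in the introduction gives $\text{lim}^1 \mathbf{A} = 0$ under PFA. For $n=2$: since PFA yields $\mathfrak{b} = \mathfrak{d} = \aleph_2$ together with $\diamondsuit(S^2_1)$, Theorem~\ref{lim2A} applies and gives $\text{lim}^2 \mathbf{A} \neq 0$. For $n \geq 3$: since $\mathfrak{d} = \aleph_2$ under PFA, Corollary~\ref{limk+1A} (with $k=2$) gives $\text{lim}^n \mathbf{A} = 0$ for all $n \geq 3$.

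There is no real obstacle here, because the content has been pushed into Theorem~\ref{lim2A} and the set-theoretic analysis of PFA. The only mild subtlety is citing the implication $\text{PFA} \Rightarrow \diamondsuit(S^2_1)$ with an appropriate reference, since this is stronger than the more widely quoted consequence $\text{PFA} \Rightarrow 2^{\aleph_0} = \aleph_2$; once that citation is in place, the three cases above combine to yield exactly the statement $\text{lim}^n \mathbf{A} \neq 0 \Leftrightarrow n = 2$ for $n \geq 1$.
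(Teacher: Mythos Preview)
Your approach is essentially identical to the paper's: split into $n=1$ (Dow--Simon--Vaughan), $n=2$ (Theorem~\ref{lim2A}), and $n\geq 3$ (Corollary~\ref{limk+1A} with $k=2$), and verify that PFA supplies the needed hypotheses. Two small corrections to the set-theoretic bookkeeping: first, $\text{MA}_{\aleph_1}$ alone does \emph{not} give $\mathfrak{d}=\aleph_2$ or $2^{\aleph_0}=\aleph_2$ (it is consistent with $2^{\aleph_0}$ arbitrarily large); the conclusion $\mathfrak{d}=\aleph_2$ under PFA requires the deeper Veli\v{c}kovi\'c/Todorcevic result that PFA implies $2^{\aleph_0}=\aleph_2$, which the paper cites as \cite{V}, \cite{Bek}. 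Second, the paper attributes $\text{PFA}\Rightarrow\diamondsuit(S^2_1)$ to Baumgartner \cite{Baum} rather than Todorcevic.
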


\begin{proof}[Proof of Corollary~\ref{PFA}] Among the consequences of PFA:
\begin{enumerate}
\item $\mathfrak{d}=\aleph_2$ (\cite{V}, \cite{Bek}). So by Corollary~\ref{limk+1A}, $\text{lim}^n\mathbf{A}=0$ for $n>2$.
\item $\text{lim}^1\mathbf{A}=0$ (\cite{DSV}). This and $\mathfrak{b}=\aleph_2$ follow in fact from a strictly weaker assumption, the Open Coloring Axiom, a consequence of PFA (\cite{T2}).
\item $\diamondsuit(S^2_1)$ (\cite{Baum}).
\end{enumerate} 
Theorem~\ref{lim2A} then completes the proof. 
\end{proof}

The condition $\mathfrak{b}=\mathfrak{d}=\aleph_2$ is equivalent to the existence of an $\omega_2$-scale.

\begin{defin} A $\gamma$-chain in $\mathcal{N}$ is a collection $\{f_\alpha\,|\,\alpha<\gamma\}\subset\mathcal{N}$ such that $\alpha<\beta$ implies $f_\alpha<^* f_\beta$. A $\gamma$-scale is a $\gamma$-chain which is $<^*$-cofinal in $\mathcal{N}$.
\end{defin}

Theorem~\ref{NEQ} is perhaps better understood as a ZFC phenomenon:

\begin{thm} \label{lim1AF} For any $\omega_1$-chain $\mathcal{F}$ in $\mathcal{N}$, there exists a nontrivial coherent $\Phi^{\mathcal{F}}=\{\phi_f\,|\,f\in\mathcal{F}\}$.
\end{thm}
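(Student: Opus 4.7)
The plan is to enumerate $\mathcal{F} = \{f_\alpha : \alpha < \omega_1\}$ in its natural $<^*$-increasing order and build $\phi_{f_\alpha} : I_{f_\alpha} \to \mathbb{Z}$ by transfinite recursion on $\alpha$, with coherence ensured at every stage and nontriviality achieved by a separate diagonalization. This is essentially the scheme from \cite{DSV} underlying Theorem~\ref{NEQ}, adapted from an $\omega_1$-scale to an arbitrary $\omega_1$-chain.

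For coherence, the successor step $\alpha = \beta+1$ is immediate: set $\phi_{f_\alpha} = \phi_{f_\beta}$ on $I_{f_\beta} \cap I_{f_\alpha}$ and choose $\phi_{f_\alpha}$ freely on $I_{f_\alpha} \setminus I_{f_\beta}$. For $\gamma < \beta$, the inclusion $I_{f_\gamma} \subseteq^* I_{f_\beta}$, combined with $\phi_{f_\alpha} = \phi_{f_\beta}$ on the overlap and the inductive $\phi_{f_\beta} =^* \phi_{f_\gamma}$, gives $\phi_{f_\alpha} =^* \phi_{f_\gamma}$ by transitivity of $=^*$. At a limit stage $\alpha$ (necessarily of countable cofinality), I fix $\alpha_n \uparrow \alpha$ and apply a countable coherence-gluing lemma: inductively perturb each $\phi_{f_{\alpha_n}}$ on a finite set so that the modified $\phi'_{f_{\alpha_n}}$ strictly extend one another, take their union on $\bigcup_n I_{f_{\alpha_n}}$, and extend freely to $I_{f_\alpha}$. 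The result is coherent with each $\phi_{f_{\alpha_n}}$, and hence with every earlier member of the family.

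For nontriviality, the plan is to exploit the recursive freedom -- the successor-stage extensions and the limit-stage perturbations and extensions -- to plant combinatorial markers distinguishing each stage. If $\phi : \mathbb{N}^2 \to \mathbb{Z}$ were a trivializer, the error sets $F_\alpha = \{x \in I_{f_\alpha} : \phi(x) \neq \phi_{f_\alpha}(x)\}$ would be finite for every $\alpha$, while the markers would impose an uncountable family of local constraints on $\phi$. A pressing-down argument on the stationary set of countable-cofinality ordinals below $\omega_1$ -- first thinning to constant $|F_\alpha| = k$, then to a common $\Delta$-system root -- should then produce two stages at which the markers constrain $\phi$ in mutually incompatible ways, yielding the desired contradiction.

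I expect the main obstacle to be the design of the markers themselves. They must carry enough information about $\alpha$ to drive the pressing-down/$\Delta$-system step to a contradiction, yet remain rigid enough to survive the finite perturbations forced at every limit-stage gluing. A natural implementation is to tie the marker values at stage $\alpha$ to a coherent sequence of injections $e_\alpha : \alpha \to \mathbb{N}$, imported into $I_{f_\alpha}$ via an auxiliary enumeration of the ``new'' pairs in $I_{f_\alpha} \setminus \bigcup_{\beta < \alpha} I_{f_\beta}$; interlocking this bookkeeping with the limit-stage gluings is the technical heart of the construction.
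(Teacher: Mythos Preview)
The paper does not give a self-contained proof of this theorem; it simply observes that a nontrivial coherent family on an $\omega_1$-chain $\mathcal{F}$ is the same data as a Hausdorff-type gap inscribed in the $\subset^*$-increasing chain $\{I_{f_\alpha}:\alpha<\omega_1\}$ of subsets of $\mathbb{N}\times\mathbb{N}\cong\mathbb{N}$, and defers to \cite{Bek}, pp.~96--98, for that construction. Your plan---a direct transfinite recursion with marker values drawn from a coherent sequence of injections $e_\alpha:\alpha\to\mathbb{N}$---is very much in the spirit of the argument recorded there (which is Todorcevic's), so the routes are close. The coherence half of your plan is standard and fine.

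The nontriviality half, however, is not yet a proof: you correctly flag it as ``the technical heart'' and then leave it unresolved. One concrete issue with your sketch: you propose placing markers on the ``new'' pairs $I_{f_\alpha}\setminus\bigcup_{\beta<\alpha}I_{f_\beta}$, but at a limit $\alpha$ this set may well be finite or empty (take $f_\alpha$ to be the pointwise supremum of a cofinal sequence below it), so there is nowhere to plant anything. The bookkeeping therefore has to be organized so that the diagonalization lives at successor stages, where $I_{f_{\alpha+1}}\setminus I_{f_\alpha}$ is genuinely infinite, and the pressing-down/$\Delta$-system step must be set up to read those successor markers. Until the interaction between the $e_\alpha$'s, the sets $I_{f_\alpha}$, and the putative trivializer $\phi$ is written out and the Fodor-style contradiction actually derived, what you have is a plausible outline rather than a proof; the cleanest path is to follow the cited gap-inscription argument and then translate the resulting gap into a $\{0,1\}$-valued coherent family via characteristic functions.
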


In other words, $\text{lim}^1\,\mathbf{A}^\mathcal{F}\neq 0$, where $\mathbf{A}^\mathcal{F}=(A_f,p_{fg},\mathcal{F})$. Theorem~\ref{lim1AF} is simply a recasting of [Be 96-98], which inscribes a gap in any $\subset^*$-increasing $\omega_1$-chain of subsets of $\mathbb{N}$. Let $\mathcal{F}^*=\{g\in\mathcal{N}\,|\,g\leq^*f\text{ for some }f\in\mathcal{F}\}$; write $\Phi^\mathcal{F}$ for $\{\phi_f\,|\,f\in\mathcal{F}\}$, as above. Any coherent $\Phi^\mathcal{F}$ extends to a coherent $\Phi^{\mathcal{F}^*}$, so the theorem gives a nontrivial coherent $\Phi^\mathcal{G}$ for any $\mathcal{G}\subseteq\mathcal{N}$ of cofinality $\aleph_1$ in the $<^*$-ordering. Such $\Phi^\mathcal{G}$ admit no ``upwards'' extensions:

\begin{obs} For any $h$ with $g\leq^* h$ for all $g\in\mathcal{G}$, no nontrivial coherent $\Phi^\mathcal{G}$ extends to a coherent $\Phi^{\mathcal{G}\cup\{h\}}$. \end{obs}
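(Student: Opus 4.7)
The plan is to prove the contrapositive: if some coherent $\Phi^\mathcal{G}$ does extend to a coherent $\Phi^{\mathcal{G}\cup\{h\}}$, then $\Phi^\mathcal{G}$ must already be trivial. The key point is that the assumption ``$g\leq^* h$ for all $g\in\mathcal{G}$'' is, as recorded in Section~2, equivalent to $I_g\subseteq^* I_h$ for every $g\in\mathcal{G}$. So the extending function $\phi_h:I_h\to\mathbb{Z}$ has a domain that almost contains each $I_g$; it is essentially already defined everywhere that matters, and only needs to be extended arbitrarily (say, by $0$) to all of $\mathbb{N}^2$ to produce a trivialization.

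Concretely, given a coherent extension witnessed by $\phi_h:I_h\to\mathbb{Z}$ with $\phi_h=^*\phi_g$ (on $I_g\cap I_h$) for each $g\in\mathcal{G}$, I would define $\phi:\mathbb{N}^2\to\mathbb{Z}$ by setting $\phi=\phi_h$ on $I_h$ and $\phi=0$ off $I_h$, and claim that this $\phi$ witnesses the triviality of $\Phi^\mathcal{G}$. Indeed, for each fixed $g\in\mathcal{G}$,
$$\{x\in I_g\mid \phi(x)\neq\phi_g(x)\}\;\subseteq\;\{x\in I_g\cap I_h\mid \phi_h(x)\neq\phi_g(x)\}\;\cup\;(I_g\setminus I_h),$$
and both sets on the right are finite: the first by coherence of $\Phi^{\mathcal{G}\cup\{h\}}$, the second by $I_g\subseteq^* I_h$. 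Hence $\phi=^*\phi_g$ for every $g\in\mathcal{G}$, contradicting the assumed nontriviality of $\Phi^\mathcal{G}$.

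Since the whole argument is a direct unpacking of the definitions of $\leq^*$, coherence, and triviality, I do not anticipate any real obstacle; there is nothing delicate to estimate. The observation functions as a cautionary counterpoint to Theorem~\ref{lim1AF}: although ZFC furnishes nontrivial coherent families on any $\omega_1$-chain, such a family can never be extended past a $<^*$-upper bound of the chain. This is precisely why the move from the $\omega_1$ case to higher cofinalities (the concern of Sections~4 and~5) is not purely a matter of induction on the length of a scale.
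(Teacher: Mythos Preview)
Your argument is correct and is exactly the paper's one-line proof spelled out in full: the paper simply says that any $\phi:\mathbb{N}\times\mathbb{N}\to\mathbb{Z}$ extending $\phi_h$ would trivialize $\Phi^{\mathcal{G}}$, and your choice of the extension by $0$ together with the decomposition $\{x\in I_g:\phi(x)\neq\phi_g(x)\}\subseteq(I_g\cap I_h\cap\{\phi_h\neq\phi_g\})\cup(I_g\setminus I_h)$ is precisely the verification the paper leaves implicit.
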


\noindent For if it did, then any $\phi:\mathbb{N}\times\mathbb{N}\rightarrow\mathbb{Z}$ extending $\phi_h$ would trivialize $\Phi^\mathcal{G}$, a contradiction. This is one key to the proof below. The other is the following:

\begin{obs} If $\Phi_1$ and $\Upsilon_1$ 2-trivialize the same $\Phi_2$ then they differ by a 1-coherent $\Psi_1$. \end{obs}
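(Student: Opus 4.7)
The plan is the direct verification that $\Psi_1 := \Phi_1 - \Upsilon_1$, defined componentwise, is 1-coherent. Write $\Phi_1 = \{\alpha_f \mid f \in \mathcal{N}\}$ and $\Upsilon_1 = \{\beta_f \mid f \in \mathcal{N}\}$, and let $\Phi_2 = \{\phi_{(f_0,f_1)} \mid (f_0,f_1) \in \mathcal{N}^{[2]}\}$. The hypothesis that each of $\Phi_1$ and $\Upsilon_1$ 2-trivializes $\Phi_2$ unpacks, via Definition~\ref{ntriv} together with the convention that the first summand is implicitly restricted to $I_{f_0}$, as
\begin{align*}
\alpha_{f_1}\!\!\upharpoonright\!\!_{I_{f_0}} - \alpha_{f_0} &=^* \phi_{(f_0,f_1)},\\
\beta_{f_1}\!\!\upharpoonright\!\!_{I_{f_0}} - \beta_{f_0} &=^* \phi_{(f_0,f_1)},
\end{align*}
for every $(f_0,f_1) \in \mathcal{N}^{[2]}$. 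Subtracting the second from the first, and using that the union of the two (finite) sets on which the respective $=^*$-equalities fail is again finite, one obtains
$$(\alpha_{f_1} - \beta_{f_1})\!\!\upharpoonright\!\!_{I_{f_0}} - (\alpha_{f_0} - \beta_{f_0}) =^* 0,$$
which is precisely the 1-coherence condition of Definition~\ref{ncoh} for $\Psi_1 = \{\alpha_f - \beta_f \mid f \in \mathcal{N}\}$.

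I do not anticipate any real obstacle: the statement is simply the $=^*$-flavored version of the standard homological-algebra fact that two cochains with equal coboundary differ by a cocycle, and it falls out of the definitions in a single line. The only mild point to flag is that $=^*$ is not transitive in general (as remarked in Section~2); what the subtraction step actually needs is the weaker and immediate fact that $a_1 =^* b_1$ and $a_2 =^* b_2$ together imply $a_1 - a_2 =^* b_1 - b_2$ on their common domain, since the error set of the difference is contained in the union of the two original finite error sets.
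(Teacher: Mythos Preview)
Your proof is correct and is exactly the intended one-line unpacking of the definitions; the paper itself gives no separate proof for this observation, treating it as immediate, and your verification is precisely the computation that justifies it.
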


\begin{proof}[Proof of Theorem~\ref{lim2A}]
Fix an $\omega_2$-scale $\mathcal{F}=\{f_\alpha\,|\,\alpha<\omega_2\}$ and an $\mathcal{S}$ witnessing $\diamondsuit(S^2_1)$. Let $\mathcal{F}_\beta=\{f\in\mathcal{N}\,|\,f\leq^*f_\alpha\text{ for some }\alpha<\beta\}$. Let $Y_\beta=\bigcup_{f\in \mathcal{F}_\beta}\mathbb{Z}^{I_f}$ and $Y=\bigcup_{\beta<\omega_2}Y_\beta$, and fix a bijection $\rho:\omega_2\rightarrow Y$. Those $\beta$ for which $\rho''\beta=Y_\beta$ form a club subset of $\omega_2$; hence for any $\Phi_1=\{\phi_f\,|\,f\in\mathcal{N}\}$ the set $S(\Phi_1)=\{\beta\in S_1^2\,|\,\rho''S_\beta=\Phi_1\cap Y_\beta\}$ is stationary.

We show $\text{lim}^2\,\mathbf{A}\neq 0$ by constructing, in stages $\Phi_2^\beta$ $(\beta<\omega_2)$, a non-2-trivial $2$-coherent $\Phi_2$: each $\Phi_2^\beta$ will be of the form $$\{\phi_{fg}\,|\,f\leq g\leq^*f_\alpha\text{ for some }\alpha<\beta\}$$ and $\Phi_2$ will be their union. By Corollary~\ref{limk+1A}, $\text{lim}^2\,\mathbf{A}^{\mathcal{F}_\beta}=0$ for every $\beta<\omega_2$, so every 2-coherent $\Phi_2^\beta$ is $2$-trivial, and therefore extends to some 2-trivial (hence 2-coherent) $\Phi_2^{\beta+1}$. For limit $\beta$, let $\Phi_2^\beta=\bigcup_{\gamma<\beta}\Phi_2^\gamma$. At stage $\beta$, if $\rho''S_\beta$ is of the form $\{\phi_f\,|\,f\in\mathcal{F}_\beta\}$ and 2-trivializes $\Phi_2^\beta$, we extend with greater care. Since $\text{cof}(\beta)=\aleph_1$, there exists by Theorem~\ref{lim1AF} a nontrivial coherent family $\Psi_1^{\mathcal{F}_\beta}$. Take any extension $\Upsilon_1^{\beta+1}=\{\upsilon_f:f\in\mathcal{F}_{\beta+1}\}$ of $\Upsilon_\beta=\rho''S_\beta+\Psi_1^{\mathcal{F}_\beta}$. Letting $\phi_{fg}=\upsilon_g\!\!\upharpoonright_{I_f}\!\!-\,\upsilon_f$ for any $g\in\mathcal{F}_{\beta+1}\backslash\mathcal{F}_\beta$ defines a 2-coherent extension $\Phi_2^{\beta+1}$ of $\Phi_2^\beta$ which is 2-trivialized by $\Upsilon_1^{\beta+1}$.

Clearly $\Phi_2$ is 2-coherent. Suppose for contradiction that $\Phi_1$ 2-trivializes $\Phi_2$. Then for $\beta\in S(\Phi_1)$, $\Phi_1\cap Y_\beta$ and $\Phi_1\cap Y_{\beta+1}$ 2-trivialize $\Phi_2^\beta$ and $\Phi_2^{\beta+1}$, respectively. By construction, $\Upsilon_1^{\beta+1}$ also 2-trivializes $\Phi_2^{\beta+1}$. By Observation 2, then, $\Upsilon_1^{\beta+1}-(\Phi_1\cap Y_{\beta+1})$ is a coherent family extending nontrivial coherent family $\Upsilon_1^{\beta}-(\Phi_1\cap Y_\beta)=\Psi_1^{\mathcal{F}_\beta}$, contradicting Observation 1.
\end{proof}

Beginning from a model of PFA, Todorcevic forced $\text{lim}^1\,\mathbf{A}\neq 0$ while preserving MA$_{\aleph_1}$ (see \cite{T1}; note that his argument -- and hence that of the theorem below -- requires none of the large cardinal consistency strength of PFA). Forcing over his model by the analogue of the above proof (conditions are 2-coherent $\Phi_2^\beta$, ordered by inclusion), then, gives the following:

\begin{thm} Under the Proper Forcing Axiom, MA$_{\aleph_1}$ is consistent with ``$\text{lim}^n\,\mathbf{A}\neq 0$ if and only if $n\leq 2$''.
\end{thm}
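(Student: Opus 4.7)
The plan is to force in two stages over a model of $\mathrm{PFA}$. First apply Todorcevic's $\sigma$-closed forcing $\mathbb{P}_1$ from \cite{T1}, which adjoins a nontrivial coherent family $\Phi$ (witnessing $\text{lim}^1\,\mathbf{A} \neq 0$) while preserving $\text{MA}_{\aleph_1}$. Since $\mathbb{P}_1$ is $\sigma$-closed it adds no reals, so $\mathfrak{b} = \mathfrak{d} = 2^{\aleph_0} = \aleph_2$ passes to the intermediate model $V_1$. In $V_1$, fix an $\omega_2$-scale $\{f_\alpha : \alpha < \omega_2\}$ and force with the poset $\mathbb{P}_2$ whose conditions are the 2-coherent approximations $\Phi_2^\beta$ of the proof of Theorem~\ref{lim2A}, for $\beta < \omega_2$, ordered by extension. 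The generic union $\Phi_2 = \bigcup G_2$ will witness $\text{lim}^2\,\mathbf{A} \neq 0$ in the final extension $V_1[G_2]$.

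Three properties of $\mathbb{P}_2$ warrant verification. First, $\mathbb{P}_2$ is $\sigma$-directed closed: the union of a countable directed family of conditions has height $\sup_n \beta_n < \omega_2$ and remains visibly 2-coherent. Hence $\mathbb{P}_2$ adds no reals, which immediately preserves $\text{MA}_{\aleph_1}$, nontriviality of $\Phi$ (so $\text{lim}^1\,\mathbf{A} \neq 0$ persists), and all cardinal invariants of the continuum. Second, a $\Delta$-system argument exploiting $2^{\aleph_0} = \aleph_2$ in $V_1$ gives $\aleph_2$-cc, so no cardinals are collapsed; therefore $\mathfrak{d} = \aleph_2$ in $V_1[G_2]$ and Corollary~\ref{limk+1A} yields $\text{lim}^n\,\mathbf{A} = 0$ for all $n \geq 3$. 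Third, $\Phi_2$ is not 2-trivial --- the crux.

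For non-2-triviality, which is the density analogue of the proof of Theorem~\ref{lim2A}, fix a nice $\mathbb{P}_2$-name $\dot{\Phi}_1$ for a candidate 2-trivialization. By the $\aleph_2$-cc of $\mathbb{P}_2$, the restriction of $\dot{\Phi}_1$ to $Y_\beta$ is determined by a maximal antichain of conditions of height $\leq \beta$ for $\beta$ in some club $C \subseteq \omega_2$; this substitutes for the $\diamondsuit(S^2_1)$-guess of Theorem~\ref{lim2A}. For each $\beta \in C \cap S_1^2$, Theorem~\ref{lim1AF} supplies a nontrivial coherent $\Psi_1^{\mathcal{F}_\beta}$, and as in Theorem~\ref{lim2A} one extends any given $\Phi_2^\beta$ to a stronger condition $\Phi_2^{\beta+1}$ whose canonical 2-trivialization $\Upsilon_1^{\beta+1}$ differs from the guessed value of $\dot{\Phi}_1$ on $Y_\beta$ by $\Psi_1^{\mathcal{F}_\beta}$. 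Observations 1 and 2 then force a contradiction for any purported $\Phi_1 \in V_1[G_2]$, so the set of conditions preventing $\dot{\Phi}_1$ from 2-trivializing $\Phi_2$ is dense.

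The main obstacle is packaging the $\aleph_2$-cc reduction to play the role of $\diamondsuit(S^2_1)$ as a ``generic guessing'' principle for $\dot{\Phi}_1$ along a stationary set of $\beta \in S_1^2$ while simultaneously maintaining compatibility with $\text{MA}_{\aleph_1}$; once this density calculation and the chain-condition bookkeeping are in hand, the combinatorial core of Theorem~\ref{lim2A} transcribes directly into the forcing setting.
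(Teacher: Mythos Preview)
Your two-stage outline matches the paper exactly: Todorcevic's forcing followed by the poset of $2$-coherent approximations $\Phi_2^\beta$. The paper's own ``proof'' is a one-sentence sketch, and much of what you add is helpful. But one of your added claims is false and it infects the key density argument.

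The forcing $\mathbb{P}_2$ is \emph{not} $\aleph_2$-cc. Two conditions of the same height $\beta$ are compatible only if they are equal (any common extension must contain both as functions), and already at height $1$ there are at least $2^{\aleph_0}=\aleph_2$ distinct $2$-coherent families on $\mathcal{F}_1$: for instance, varying a single $\phi_{f_0}\in\mathbb{Z}^{I_{f_0}}$ in a $2$-trivialization $\{\phi_f\}$ produces pairwise distinct families $\{\phi_g\!\!\upharpoonright_{I_f}-\phi_f\}$. So no $\Delta$-system argument can work, and your reduction of the name $\dot{\Phi}_1$ to antichains bounded in height collapses.

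What you have undersold is closure. The poset is not merely $\sigma$-closed but $<\!\omega_2$-closed: any descending chain of length $<\omega_2$ has supremum of heights still below $\omega_2$, and the union is visibly $2$-coherent. This already gives preservation of $\aleph_1$, $\aleph_2$, $\text{MA}_{\aleph_1}$, the nontriviality of $\Phi$, and $\mathfrak{b}=\mathfrak{d}=\aleph_2$, with no chain condition needed. For the density argument, use $<\!\omega_2$-closure (and a dovetailing bookkeeping over $\omega_1$ steps) to produce, below any $p$, a condition $q$ of height $\gamma$ with $\text{cof}(\gamma)=\omega_1$ which decides $\dot{\phi}_{f_\alpha}$ for all $\alpha<\gamma$; since the scale is cofinal in $\mathcal{F}_\gamma$, this determines the putative trivialization on $\mathcal{F}_\gamma$ modulo $\mathbf{A}$, which is exactly enough to run the Theorem~\ref{lim2A} step with $\Psi_1^{\mathcal{F}_\gamma}$ and Observations~1 and~2. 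That is the correct replacement for $\diamondsuit(S^2_1)$ here, not the $\aleph_2$-cc you invoke.
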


\section{Relating $\text{lim}^1\mathbf{A}$ to $\text{lim}^1\,\mathbf{A}_\kappa$}

Let $\mathbf{A}_\kappa=(A_f,p_{fg},\omega^\kappa)$, where $A_f=\bigoplus_{\alpha<\kappa}\mathbb{Z}^{f(\alpha)}=\{\phi_f:I_f\rightarrow\mathbb{Z}\,|\,\text{supp}(f)\text{ is}$ $\text{finite}\}$ and $p_{fg}:\phi_f\mapsto\phi_f\!\!\upharpoonright_{I_g}$, as before. $\mathbf{A}_\kappa$ generalizes $\mathbf{A}$ both in form ($\mathbf{A}_\omega=\mathbf{A}$) and in content: $\bar{H}_p(Y^{(k)})=\text{lim}^{k-p}\,\mathbf{A}_\kappa$ for $Y^{(k)}$ the disjoint union of $\kappa$ many $k$-dimensional Hawaiian earrings when $0<p<k$. We show the following relation:

\begin{thm} \label{limAkap} $\text{lim}^1\,\mathbf{A}=0$ if and only if $\text{lim}^1\,\mathbf{A}_\kappa=0$ for all infinite $\kappa$.
\end{thm}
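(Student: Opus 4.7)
The reverse direction is immediate: $\mathbf{A}_{\aleph_0} = \mathbf{A}$. For the forward direction, assume $\text{lim}^1\,\mathbf{A} = 0$, and show $\text{lim}^1\,\mathbf{A}_\kappa = 0$ for every infinite $\kappa$ by transfinite induction on the cardinal $\kappa$, with the hypothesis supplying the base case $\kappa = \aleph_0$.

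Preparatory to the induction, I would establish a uniqueness lemma: any function $\eta: \kappa \times \omega \to \mathbb{Z}$ with $\eta \upharpoonright_{I_f}$ finite for every $f \in \omega^\kappa$ has finite support. Fix any $\alpha < \kappa$; restricting to $f$ supported at $\alpha$ of arbitrary height shows each column of $\eta$ is finitely supported. Setting $h(\alpha) = \max\{j : \eta(\alpha, j) \neq 0\} \in \omega^\kappa$, the support of $\eta$ lies in $I_h$, and the hypothesis $\eta \upharpoonright_{I_h}$ finite forces $\text{supp}(\eta)$ itself to be finite. Consequently any two trivializations of a coherent family on $\omega^\kappa$ differ by a finitely supported function.

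For the inductive step, fix a coherent $\Phi = \{\phi_f : f \in \omega^\kappa\}$ and an enumeration $\{\alpha_\beta : \beta < \kappa\}$ of $\kappa$. Writing $C_\beta = \{\alpha_\gamma : \gamma < \beta\}$, we have $|C_\beta| < \kappa$, so by the inductive hypothesis each restricted family $\Phi^{C_\beta} := \Phi \upharpoonright_{\omega^{C_\beta}}$ is trivial. The plan is to construct a chain of trivializations $\phi^\beta: C_\beta \times \omega \to \mathbb{Z}$ of $\Phi^{C_\beta}$ with $\phi^{\beta'} \upharpoonright_{C_\beta \times \omega} = \phi^\beta$ for $\beta < \beta'$, and then to take $\phi := \bigcup_\beta \phi^\beta$. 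The successor step is routine: pick any trivialization $\psi$ of $\Phi^{C_{\beta+1}}$; by the uniqueness lemma $\psi \upharpoonright_{C_\beta \times \omega} - \phi^\beta$ is finitely supported, and subtracting its zero-extension to $C_{\beta+1} \times \omega$ from $\psi$ produces the required $\phi^{\beta+1}$.

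The hard part will be the limit step. At a limit $\beta$ the inductive hypothesis supplies a trivialization $\psi$ of $\Phi^{C_\beta}$, and each $\psi \upharpoonright_{C_\gamma \times \omega} - \phi^\gamma$ is finitely supported; but these supports may swell without bound as $\gamma \to \beta$, so $\psi - \bigcup_\gamma \phi^\gamma$ may itself be infinitely supported, and then $\bigcup_\gamma \phi^\gamma$ need not trivialize $\Phi^{C_\beta}$. Overcoming this requires choosing the successor-stage corrections with foresight, keeping the cumulative error finitely supported at every limit. I would pursue this via a reflection argument with elementary submodels $M \prec H(\theta)$ of size $<\kappa$ containing $\Phi$, aligning each $\phi^\beta$ to objects reflected in $M$ so as to constrain the growth of the errors; an alternative route is a direct combinatorial bookkeeping tied to a continuous cofinal tower in $[\kappa]^{<\kappa}$. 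This is the principal obstacle, and I expect it to exploit the full inductive hypothesis together with the specific coherence structure of $\Phi$, rather than the base case alone.
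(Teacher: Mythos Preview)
Your proposal has a genuine gap: the limit step is acknowledged but not solved, and the gestures toward elementary submodels or bookkeeping are too vague to evaluate. More seriously, the framework of an \emph{exactly} extending chain $\phi^{\beta}\subseteq\phi^{\beta'}$ may be too rigid to repair. At a limit $\beta$ the union $\bigcup_{\gamma<\beta}\phi^\gamma$ is already determined by earlier choices, and if it fails to trivialize $\Phi^{C_\beta}$ there is no way to correct it without violating the extension requirement you have imposed. Your successor-stage freedom is only a finite modification on a single new column, and nothing in the construction anticipates which of the unboundedly many future limit stages will need attention; it is not at all clear that any bookkeeping scheme can reconcile these demands.

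The paper's argument avoids this by abandoning the continuous-chain idea entirely. It splits the inductive step on $\mathrm{cof}(\kappa)$. When $\mathrm{cof}(\kappa)=\omega$, it pastes together trivializations on blocks $L_j=[\beta_j,\beta_{j+1})$ (supplied by the induction hypothesis) into a single candidate $\phi$, and then proves by an $\omega_1$-pigeonhole argument that the error sets $e(\phi_f,\phi)$, over those $f$ with $\phi_f\neq^*\phi$, are all $\subseteq^*$ a single $x\in[\kappa]^{<\kappa}$; overwriting $\phi$ on $x\times\omega$ with a trivialization of $\Phi\!\upharpoonright_x$ finishes. When $\mathrm{cof}(\kappa)>\omega$, the paper introduces \emph{stacked} functions $\phi^{\mathcal{F}}(\alpha,i)=\phi_{f_k}(\alpha,i)$ for the least $k$ with $(\alpha,i)\in I_{f_k}$, shows any two such agree above some $\delta<\kappa$, fixes (via the induction hypothesis) trivializations $\phi_\beta$ of each $\Phi\!\upharpoonright_\beta$, and then, assuming nontriviality, constructs an $f_0$ to prepend to a fixed stack so that the new stacked function disagrees with the old at unboundedly many $\alpha$, a contradiction. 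Your uniqueness lemma is correct and amounts to the coherence of the $\phi_\beta$'s the paper records in this second case, but the substantive content lies in the two combinatorial arguments just described, neither of which appears in your outline.
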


Replacing $\mathcal{N}$ with $\omega^\kappa$ in Definitions~\ref{ncoh} and \ref{ntriv}, the arguments of Theorem~\ref{limnA} apply equally to $\text{lim}^n\,\mathbf{A}_\kappa$, so one direction of the theorem is clear: if every $n$-coherent family $\{\phi_{\vec{f}}\,|\,\vec{f}\in(\omega^\kappa)^{[n]}\}$ is $n$-trivial, so too must be every $n$-coherent family $\{\phi_{\vec{f}}\,|\,\vec{f}\in(\omega^\omega)^{[n]}\}$. In other words:
\begin{obs}  For $n> 0$, $\text{lim}^n\,\mathbf{A}_\kappa=0$ implies $\text{lim}^n\,\mathbf{A}=0$. \end{obs}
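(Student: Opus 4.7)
The plan is to use the characterization of $\text{lim}^n\,\mathbf{A}_\kappa$ analogous to Theorem~\ref{limnA} (which the author has just noted applies verbatim with $\mathcal{N}$ replaced by $\omega^\kappa$), and then transfer between the two settings by an extension-and-restriction argument. Concretely, given an $n$-coherent $\Phi = \{\phi_{\vec{f}}\,|\,\vec{f}\in\mathcal{N}^{[n]}\}$, I will produce an $n$-coherent $\Phi^*$ indexed by $(\omega^\kappa)^{[n]}$, invoke the hypothesis $\text{lim}^n\,\mathbf{A}_\kappa = 0$ to $n$-trivialize $\Phi^*$, and then restrict the trivialization back to witness $n$-triviality of $\Phi$.

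For the extension, embed $\omega^\omega$ into $\omega^\kappa$ by padding with zeros: for $f\in\mathcal{N}$ let $\hat f\in\omega^\kappa$ agree with $f$ on $\omega$ and take value $0$ on $\kappa\setminus\omega$; dually, for $g\in\omega^\kappa$ let $\tilde g = g\upharpoonright\omega\in\mathcal{N}$. Both maps are order-preserving, and $I_{\hat f}$ differs from $I_f$ only on $(\kappa\setminus\omega)\times\mathbb{N}$. Define $\Phi^* = \{\phi^*_{\vec g}\,|\,\vec g\in(\omega^\kappa)^{[n]}\}$ by letting $\phi^*_{\vec g}$ be the extension by zero of $\phi_{\tilde{\vec g}}$ to $I_{g_0}$. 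The alternating sum defining $n$-coherence of $\Phi^*$ at $\vec g\in(\omega^\kappa)^{[n+1]}$ equals the corresponding alternating sum for $\Phi$ at $\tilde{\vec g}$ on $I_{\tilde g_0}\subseteq I_{g_0}$, and vanishes identically on $I_{g_0}\setminus I_{\tilde g_0}$; so $\Phi^*$ is $n$-coherent.

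By hypothesis there exists $\Psi^* = \{\psi^*_{\vec g}\,|\,\vec g\in(\omega^\kappa)^{[n-1]}\}$ witnessing $n$-triviality of $\Phi^*$. Set $\psi_{\vec f} = \psi^*_{\hat{\vec f}}\upharpoonright I_{f_0}$ for $\vec f\in\mathcal{N}^{[n-1]}$ (and, in the $n=1$ case, $\psi_\varnothing = \psi^*_\varnothing\upharpoonright(\mathbb{N}\times\mathbb{N})$). Restricting the identity $\sum_{i=0}^{n-1}(-1)^i\psi^*_{\hat{\vec f}^i} =^* \phi^*_{\hat{\vec f}}$ to $I_{f_0}$ and using $\phi^*_{\hat{\vec f}}\upharpoonright I_{f_0} = \phi_{\vec f}$ gives the $n$-triviality equation for $\Phi$. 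I expect no genuine obstacle; the only care required is bookkeeping around the domain conventions for $\phi_{\vec f}$ versus $\phi_\varnothing$ and verifying that the extension-by-zero construction respects the alternating-sum identities, both of which are essentially definitional.
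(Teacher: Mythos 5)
Your proof is correct and is essentially the argument the paper intends: the paper dismisses this direction as ``clear'' once the characterization of Theorem~\ref{limnA} is transferred to $\omega^\kappa$, and your extension-by-zero along the padding embedding $f\mapsto\hat f$ followed by restriction of the trivializing family is precisely the natural way to fill in that one-line claim. The bookkeeping you flag (the $\vec f=\varnothing$ convention, compatibility of deletion with the hat map, and that $=^*$ survives restriction to $I_{f_0}\subseteq I_{\hat f_0}$) all checks out.
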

For the other direction of Theorem~\ref{limAkap}, we assume $\text{lim}^1\,\mathbf{A}=0$, fix a coherent family $\Phi=\{\phi_f\,|\,f\in\omega^\kappa\}$ and show it trivial. This we'll argue by transfinite induction on $\kappa$. The argument separates into the two cases $\text{cof}(\kappa)=\omega$ and $\text{cof}(\kappa)>\omega$. The hypothesis in all cases is that $\text{lim}^1\,\mathbf{A}_\lambda=0$ for $\lambda<\kappa$; hence $\Phi\!\!\!\upharpoonright_{x}\,=\{\phi_f\!\!\!\upharpoonright_{I_f\cap(x\times\omega)}\!|\;f\in\omega^\kappa\}$ is trivial for any $x\in[\kappa]^{<\kappa}$. We'll want to measure the failure of various $\phi$ to trivialize $\Phi$; for this the notation $e(\phi,\psi)=\{\alpha\,|\,\phi(\alpha,i)\neq\psi(\alpha,i)\text{ for some }i\}$ will be useful.

\begin{proof}

Case 1: $\text{cof}(\kappa)=\omega$. Fix $\{\beta_j\,|\,j<\omega\}$ cofinal in $\kappa$, with $\beta_0=0$. Let $L_j=[\beta_j,\beta_{j+1})$ and fix, for all $j<\omega$, some $\phi_j:L_j\times\omega\rightarrow\mathbb{Z}$ trivializing $\Phi\!\!\upharpoonright_{L_j}$. For all $\alpha<\kappa$, there's a unique $j(\alpha)$ such that $\alpha\in L_{j(\alpha)}$. Define $\phi:\kappa\times\omega\rightarrow\mathbb{Z}$ by $\phi(\alpha,i)=\phi_{j(\alpha)}(\alpha,i)$. Let $\text{err}(\phi)=\{f\in\omega^\kappa\,|\,\phi_f\neq^*\phi\}$, i.e., $\text{err}(\phi)$ collects those $f$ such that $e(\phi_f,\phi)$ is infinite. Note that $e(\phi_f,\phi)$ is countable for every $f$, and that $\text{err}(\phi)=\varnothing$ if and only if $\phi$ trivializes $\Phi$.

Say $x$ \textit{bounds} a collection $\mathcal{C}\subset P(\kappa)$ if $c\subset^* x$ for all $c\in\mathcal{C}$. For any $x\in[\kappa]^{<\kappa}$ bounding $\{e(\phi_f,\phi):f\in\text{err}(\phi)\}$, it is our induction hypothesis that some $\psi: x\times\omega\rightarrow\mathbb{Z}$ trivializes $\Phi\!\!\upharpoonright_{x}$. Define $\phi': \kappa\times\omega\rightarrow\mathbb{Z}$:
\begin{displaymath}
   \phi'(\alpha,i) = \left\{
     \begin{array}{lr}
       \psi(\alpha,i) & \alpha\in x\\
       \phi(\alpha,i) & \text{otherwise}
     \end{array}
   \right.
\end{displaymath}
Observe that $\phi'$ trivializes $\Phi$.

We show that such an $x$ must always exist. If not, then there exist $f_\xi\in\text{err}(\phi)$ ($\xi<\omega_1$) such that $u(\xi)=e(\phi_{f_\xi},\phi)\backslash\bigcup_{\eta<\xi} e(\phi_{f_\eta},\phi)$ is infinite for every $\xi$. Define $g:\kappa\rightarrow\omega$ by $g(\alpha)=f_\xi(\alpha)$ if $\alpha\in u(\xi)$, $g(\alpha)=0$ otherwise. For some $j<\omega$, $A=\{\xi<\omega_1\,|\,e(\phi_{f_\xi},\phi_g)<\beta_j\}$ is uncountable. For some $k\geq j$, $\{\xi\in A\,|\,u(\xi)\cap L_k\neq \varnothing\}$ is uncountable as well. But this gives uncountably many $\alpha_\xi\in L_k$ such that, for some $i$, $\phi_g(\alpha_\xi,i)=\phi_{f_\xi}(\alpha_\xi,i)\neq\phi(\alpha_\xi,i)=\phi_k(\alpha_\xi,i)$. Hence $\phi_k$ does not trivialize $\Phi\!\!\upharpoonright_{L_k}$, a contradiction.\\

Case 2: $\text{cof}(\kappa)>\omega$. \textit{Stacked functions} are natural attempts to trivialize $\Phi$:
\begin{defin} A collection of functions $f_j\in\omega^\kappa$ such that $\bigcup_{j\in\omega}I_{f_j}=\kappa\times\omega$ is a \textit{stack}. $\phi:\kappa\times\omega\rightarrow\mathbb{Z}$ is \textit{stacked} if $\phi:(\alpha,i)\mapsto\phi_{f_k}(\alpha,i)$ for some stack $\mathcal{F}=\langle f_j\rangle$, where $k=\text{min}\{j:(\alpha,i)\in I_{f_j}\}$.
\end{defin}
If $\mathcal{F}$ so determines $\phi$, write $\phi=\phi^\mathcal{F}$.
\begin{lem} \label{stack} For any stacked functions $\phi$, $\psi$, there exists $\delta<\kappa$ such that $\phi(\alpha,i)=\psi(\alpha,i)$ whenever $\alpha>\delta$.
\end{lem}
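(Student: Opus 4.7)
The plan is to exploit the pairwise coherence of $\Phi$ among the countably many functions appearing in the two stacks and then invoke $\text{cof}(\kappa)>\omega$ to confine all disagreement to an initial segment of $\kappa$.

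Writing $\mathcal{F}=\langle f_j\rangle_{j<\omega}$, $\mathcal{G}=\langle g_{j'}\rangle_{j'<\omega}$, and $\phi=\phi^{\mathcal{F}}$, $\psi=\phi^{\mathcal{G}}$, for each $(\alpha,i)\in\kappa\times\omega$ the stack condition $\bigcup_{j}I_{f_j}=\bigcup_{j'}I_{g_{j'}}=\kappa\times\omega$ ensures that $k_{\mathcal{F}}(\alpha,i):=\min\{j:i\leq f_j(\alpha)\}$ and $k_{\mathcal{G}}(\alpha,i):=\min\{j':i\leq g_{j'}(\alpha)\}$ are well-defined, and by construction $\phi(\alpha,i)=\phi_{f_{k_{\mathcal{F}}}}(\alpha,i)$ and $\psi(\alpha,i)=\phi_{g_{k_{\mathcal{G}}}}(\alpha,i)$, both sides being defined because $(\alpha,i)$ lies in $I_{f_{k_{\mathcal{F}}}}\cap I_{g_{k_{\mathcal{G}}}}$.

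Next I would invoke coherence pair by pair: for each $(j,j')\in\omega\times\omega$ the set
$$E_{j,j'}=\{(\alpha,i)\in I_{f_j}\cap I_{g_{j'}}:\phi_{f_j}(\alpha,i)\neq\phi_{g_{j'}}(\alpha,i)\}$$
is finite, so $E=\bigcup_{j,j'<\omega}E_{j,j'}\subset\kappa\times\omega$ is countable. The first-coordinate projection $\pi_1(E)\subset\kappa$ is therefore countable, and since $\text{cof}(\kappa)>\omega$, $\delta:=\sup\pi_1(E)<\kappa$. For any $\alpha>\delta$ and any $i$, $(\alpha,i)\notin E_{k_{\mathcal{F}}(\alpha,i),k_{\mathcal{G}}(\alpha,i)}$, so $\phi_{f_{k_{\mathcal{F}}}}(\alpha,i)=\phi_{g_{k_{\mathcal{G}}}}(\alpha,i)$, i.e., $\phi(\alpha,i)=\psi(\alpha,i)$, which is the desired conclusion.

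The argument presents no real obstacle: its entire content is that two countable families of pairwise $=^{*}$-equal functions have a joint disagreement set with countable projection to $\kappa$, which uncountable cofinality then caps below some $\delta<\kappa$. The only point requiring even minor care is verifying that the minimum indices $k_{\mathcal{F}}$ and $k_{\mathcal{G}}$ are defined everywhere on $\kappa\times\omega$, which is precisely the role of the stack covering condition.
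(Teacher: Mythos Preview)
Your proof is correct and is essentially the same argument as the paper's: both use pairwise coherence of $\Phi$ to see that the total disagreement across the countably many pairs $(f_j,g_{j'})$ projects to a countable subset of $\kappa$, which $\text{cof}(\kappa)>\omega$ then bounds. The paper phrases this as a brief contradiction (if $e(\phi,\psi)$ were uncountable, pigeonhole would make some $e(\phi_{f_j},\phi_{g_k})$ uncountable, contradicting coherence), while you argue directly by taking the union $E=\bigcup_{j,j'}E_{j,j'}$; the content is the same.
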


\begin{proof} Let $\mathcal{F}=\langle f_j\rangle$, $\mathcal{G}=\langle g_k\rangle$ determine $\phi$ and $\psi$, respectively. Were $e(\phi,\psi)=\{\alpha:\phi(\alpha,i)\neq\psi(\alpha,i)\text{ for some }i\}$ uncountable, so too would be $e(\phi_{f_j},\phi_{g_k})$ for some $j,k\in\omega$. This cannot be; hence $e(\phi,\psi)$ is bounded below $\kappa$.
\end{proof}

Applying the induction hypothesis, for $\beta<\kappa$ fix $\phi_\beta:\beta\times\omega\rightarrow\mathbb{Z}$ trivializing $\Phi\!\!\upharpoonright_{\beta}$. Note that these $\phi_\beta$ ``cohere'': $e(\phi_\beta,\phi_\gamma)$ is finite, for every $\beta<\gamma<\kappa$.
Now fix a stack $\mathcal{F}=\langle f_j\,|\,0<j<\omega\rangle$. Note the index-shift: though $\phi=\phi^\mathcal{F}$ is defined, we've left room at index 0 for one more function $f_0$ (room, in other words, to revise $\phi^\mathcal{F}\!\!\!\upharpoonright_{I_{f_0}}$ to $\phi_{f_0}$). Now assume, towards contradiction, that $\Phi$ is nontrivial.

For all $\alpha<\kappa$ there exists a least $\alpha^+<\kappa$ such that $e(\phi_{\alpha^+},\phi)\cap[\alpha,\alpha^+)$ is infinite; if for some $\beta<\kappa$ this were not so, then
\begin{displaymath}
   \phi'(\alpha,i) = \left\{
     \begin{array}{lr}
       \phi_\beta(\alpha,i) & \alpha<\beta\\
       \phi(\alpha,i) & \text{otherwise}
     \end{array}
   \right.
\end{displaymath}
would trivialize $\Phi$. Let $A=\{\alpha<\kappa\,|\,\alpha\in e(\phi_{\alpha^+},\phi)\}$. If $\alpha\in A$ let $f_0(\alpha)=\text{min}\{i\,|\,\phi_{\alpha^+}(\alpha,i)\neq\phi(\alpha,i)\}$. For $\alpha\in \kappa\backslash A$ let $f_0(\alpha)=0$.

Let $\psi=\psi^{\mathcal{F}\cup\{f_0\}}$; by Lemma~\ref{stack}, take $\delta<\kappa$ such that $\psi(\alpha,i)=\phi(\alpha,i)$ for all $\alpha>\delta$. By the coherence of $\{\phi_\beta\,|\,\beta<\kappa\}$, $\alpha^+=\delta^+$ for $\alpha\in A\cap[\delta,\delta^+)$. So $\phi_{\delta^+}(\alpha,i)\neq\phi(\alpha,i)$ for infinitely many $(\alpha,i)\in I_{f_0}\cap([\delta,\delta^+)\times\omega)$, by the definition of $f_0$. But $\psi(\alpha,i)=\phi_{f_0}(\alpha,i)$ for such $(\alpha,i)$, and $\phi_{f_0}(\alpha,i)=\phi_{\delta^+}(\alpha,i)$ for all but finitely many $(\alpha,i)$, hence $\psi(\alpha,i)\neq\phi(\alpha,i)$ for some $\alpha>\delta$ - a contradiction.
\end{proof}

\section{Open Problems}

The foregoing suggests a number of further questions:

\begin{enumerate}

\item \textit{For $n>1$ does $\text{lim}^n\,\mathbf{A}=0$ imply $\text{lim}^n\,\mathbf{A}_\kappa=0$?}

\item \textit{Does $\text{lim}^n\,\mathbf{A}_\kappa=0$ for all $n>0$, $\kappa\geq\omega$ imply strong homology additive on, e.g., locally compact metric spaces?}

\noindent Here there are two questions, really, in play. Andrei Prasolov has exhibited a paracompact, non-metrizable ZFC counterexample to the additivity of strong homology (see \cite{Pra}). So a first question is \textit{On what class of spaces can strong homology be additive?} Prasolov's example is a kind of upper bound. Secondly: \textit{On that class of spaces, are nonzero $\text{lim}^n\mathbf{A}_\kappa$ the only obstructions to additivity?}

\item \textit{Is it consistent that $\text{lim}^n\,\mathbf{A}_\kappa=0$ for all $n>0$, $\kappa\geq\omega$?}

This extends a question of Moore's (see \cite{Moore}): \textit{Is it consistent that $\text{lim}^1\mathbf{A}=\text{lim}^2\mathbf{A}=0$?}

\item \textit{Is it consistent that $\text{lim}^3\mathbf{A}\neq 0$?}

Arguments like ours for Theorem~\ref{lim2A} would require higher analogues of Theorem~\ref{lim1AF}. An affirmative answer to 4, in other words, would follow from an affirmative answer to 5, in the case $n=2$.

\item \textit{Given an $\omega_n$-chain $\mathcal{F}\subset\mathcal{N}$, does $\text{lim}^n\mathbf{A}^\mathcal{F}\neq 0$?}

\item \textit{Can a witness to $\text{lim}^n\mathbf{A}\neq 0$ be analytic?}

Todorcevic has given a negative answer in the case $n=1$ \cite{T1}.

\end{enumerate}

\subsection*{Acknowledgements}
For suggesting the above problem, and for continual guidance and instruction, the author would like to thank Justin Tatch Moore. The author would like to thank Andrei Prasolov as well, for many helpful comments.

\end{document}